\documentclass{amsart}
\usepackage{txfonts}
\usepackage{lipsum}
\usepackage[colorlinks, linkcolor=black, citecolor=blue, linktocpage,urlcolor=blue]{hyperref}
\numberwithin{equation}{section}
\theoremstyle{definition}
\newtheorem{thm}{Theorem}[section]
\newtheorem{definition}[thm]{Definition}
\newtheorem{proposition}[thm]{Proposition}

\newtheorem{lemma}[thm]{Lemma}
\newtheorem{corollary}[thm]{Corollary}

\newtheorem{remark}[thm]{Remark}

\begin{document}
	\setlength{\abovedisplayskip}{10pt}
	\setlength{\belowdisplayskip}{5pt}
	
	\newcommand{\Ext}{\bigwedge\nolimits}
	\newcommand{\Div}{\operatorname{div}}
	\newcommand{\Hol} {\operatorname{Hol}}
	\newcommand{\diam} {\operatorname{diam}}
	\newcommand{\Scal} {\operatorname{Scal}}
	\newcommand{\scal} {\operatorname{scal}}
	\newcommand{\Ric} {\operatorname{Ric}}
	\newcommand{\Hess} {\operatorname{Hess}}
	\newcommand{\grad} {\operatorname{grad}}
	\newcommand{\Rm} {\operatorname{Rm}}
	\newcommand{ \Rmzero } {\mathring{\Rm}}
	\newcommand{\Rc} {\operatorname{Rc}}
	\newcommand{\Curv} {S_{B}^{2}\left( \mathfrak{so}(n) \right) }
	\newcommand{ \tr } {\operatorname{tr}}
	\newcommand{ \Riczero } {\mathring{\Ric}}
	\newcommand{ \Ad } {\operatorname{Ad}}
	\newcommand{ \dist } {\operatorname{dist}}
	\newcommand{ \rank } {\operatorname{rank}}
	\newcommand{\Vol}{\operatorname{Vol}}
	\newcommand{\dVol}{\operatorname{dVol}}
	\newcommand{ \zitieren }[1]{ \hspace{-3mm} \cite{#1}}
	\newcommand{ \pr }{\operatorname{pr}}
	\newcommand{\diag}{\operatorname{diag}}
	\newcommand{\Lagr}{\mathcal{L}}
	\newcommand{\av}{\operatorname{av}}
	\newcommand{ \floor }[1]{ \lfloor #1 \rfloor }
	\newcommand{ \ceil }[1]{ \lceil #1 \rceil }
	\newcommand{\Sym} {\operatorname{Sym}}
	\newcommand{\bcirc}{ \ \bar{\circ} \ }
	\newcommand{\sign}[1]{\operatorname{sign}(#1)}
	\newcommand{\cone}{\operatorname{cone}}
	\newcommand{\pbd}{\varphi_{bar}^{\delta}}
	\newcommand{\End}{\operatorname{End}}
	
	\renewcommand{\labelenumi}{(\alph{enumi})}
	\newtheorem{maintheorem}{Theorem}[]
	\renewcommand*{\themaintheorem}{\Alph{maintheorem}}
	\newtheorem*{remark*}{Remark}
	\title{Manifolds with harmonic Weyl curvature and curvature operator of the second kind}
	
	%    Remove any unused author tags.
	
	%    author one information
	\author{Haiping Fu}
	\address{Department of Mathematics, Nanchang University, Nanchang 330031, People’s Republic of China}
	\curraddr{}
	\email{mathfu@126.com}
	\thanks{Supported in part by National Natural Science Foundations of China \#12461008 and 12271069,  Jiangxi Province
		Natural Science Foundation of China \#20202ACB201001, Jiangxi Province Graduate Student Innovation Special Fund Project \#YC2025-B037.}
	
	%    author two information
	\author{Yao Lu}
	\address{Department of Mathematics, Nanchang University, Nanchang 330031, People’s Republic of China}
	\curraddr{}
	\email{luyao@email.ncu.edu.cn}
	\thanks{}
	
	\subjclass[2010]{Primary 53C20, 53C24, 53C25.}
	
	\keywords{Einstein manifolds, Bochner technique, Sphere theorems}
	
	\date{}
	
	\dedicatory{}
	
	\begin{abstract}
		We prove that a compact Riemannian manifold of dimension $n\ge 8$ with harmonic Weyl curvature and $\frac{3(n-1)(n+2)}{4(3n-1)}$-nonnegative curvature operator of the second kind is either globally conformally equivalent to a space of positive constant curvature or is isometric to a flat manifold. In particular, We also give a classification of
		four-dimensional manifolds with harmonic Weyl curvature satisfying a cone condition. This result generalizes the work in \cite{DFY24,FLD,Li22}.
	\end{abstract}
	\maketitle
	
	\section{introduction}
	
	A central theme in Riemannian geometry is the study of the relationship between curvature operator and the topological structure of Riemannian manifolds.
	A well-known theorem of Tachibana \cite{Tac74} showed that any compact Riemannian manifold of dimension $n\ge3$ with harmonic curvature and positive(resp., nonnegative) curvature operator is isometric to a quotient of  the standard sphere(resp., locally symmetric). This result was improved by Tran \cite{Tra17}, who proved that a compact Riemannian manifold of dimension $n\ge 4$ with harmonic Weyl curvature and  positive curvature operator is locally conformally flat. In \cite{PW22}, Petersen and Wink showed that a compact Riemannian manifold with harmonic Weyl tensor and $\left[\frac{n-1}{2}\right]$-nonnegative curvature operator is either globally conformally equivalent to a quotient of the standard sphere or locally symmetric; the former case is guaranteed when the curvature operator is $\left[\frac{n-1}{2}\right]$-positive. Subsequently, in \cite{PW21}, they established a related Tachibana-type theorem under the stronger Einstein condition. More recently, Colombo, Mariani, and Rigoli \cite{CMR24} proved that a compact Riemannian manifold of dimension $n\ge3$ with harmonic curvature and $\left[\frac{n-1}{2}\right]$-positive curvature operator must have constant sectional curvature, generalizing Tachibana’s classical result. This finding was further refined by Bettiol and Goodman \cite{BG24}, who showed the condition can be relaxed to $\frac{n-1}{2}$-positive curvature operator for $n\ge5$, and to $\frac{n}{2}$-positive curvature operator for $n=3,4$.
	
	There is another way to define the curvature operator induced by Riemannian curvature tensor, known as the curvature operator of the second kind, see section 2.1 blow for details. In 1986, Nishikawa conjecture \cite{Nis86} states that a closed Riemannian manifold with positive (resp., nonnegative) curvature operator of the second kind is diffeomorphic to a spherical space form (resp., a Riemannian locally symmetric space). This conjecture was later proved by Cao-Gursky-Tran \cite{CGT23} and further refined by Li \cite{Li22,Li24}. Both proofs relied on Brendle’s convergence result \cite{Brendle} for closed Ricci flows with PIC1 toward constant sectional curvature. 
	
	For certain special classes of manifolds, Kashiwada \cite{Kas93} showed that Riemannian manifolds with harmonic curvature and nonnegative (resp., positive) curvature operator of the second kind  are locally symmetric spaces (resp., constant curvature spaces) by using Bochner tenchique.
	Later, Nienhaus, Petersen, and Wink \cite{NPW23} derived a new Bochner formula for the curvature operator of the second kind and used it to prove that $n$-dimensional compact Einstein manifolds with $k(<\frac{3n(n+2)}{2(n+4)})$-nonnegative curvature operators of the second kind must be either flat or a rational homology sphere. Subsequently, Dai and Fu \cite{DF24} also obtained a new Bochner formula and employed it to prove that any compact Einstein manifold must have constant curvature if its curvature operator of the second kind is $k$-nonnegative, where $k=1$ for $4\leq n\leq7$, $k=2$ for $8\leq n\leq10$ and  $k=[\frac{n+2}{4}]$ for $n\geq11$. In 2024, Dai-Fu-Yang	\cite{DFY24} improved upon Kashiwada’s result by proving that a compact manifold with harmonic Weyl tensor and nonnegative curvature operator of the second kind is either globally conformally equivalent to a space of positive constant curvature or is isometric to a flat manifold. Recently, Fu, Lu and Dai \cite{FLD} show a rigidity theorem for manifolds with harmonic curvature, which generalizes the results of \cite{Kas93,DF24}. For further developments on this topic, we refer the reader   to \cite{Li23a, Li24b, Li25, HZ25, FL}.
	
	Let the nondecreasing sequence $\lambda_{i}$ be the eigenvalues of the curvature operator. If $${\lambda }_1+{\lambda }_2+\cdots {\lambda }_{[k]}+(k-[k])\lambda _{[k]+1}> 0(\ge 0),$$ 
	we say that the curvature operator is $k$-positive(resp., nonnegative).
	In this paper, we study Riemannian manifolds of dimension  $n\ge4$ that possess harmonic Weyl curvature and satisfy conditions on the curvature operator of the second kind. Our main results are the following three theorems.
	\begin{maintheorem}\label{A}
		Let $(M, g)$ be an $n(\ge 8)$-dimensional compact Riemannian manifold with harmonic Weyl curvature tensor. If the curvature operator of the second kind $\mathring{R}$ is $\frac{3(n-1)(n+2)}{4(3n-1)}$-nonnegative, then $M$ is either globally conformally equivalent to a space of positive constant curvature or is isometric to a flat manifold.
	\end{maintheorem}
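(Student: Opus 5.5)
Since the Weyl tensor is harmonic, $\delta W=0$. This is equivalent to the Schouten tensor $A=\frac{1}{n-2}\bigl(\Ric-\frac{\scal}{2(n-1)}g\bigr)$ being Codazzi. The plan follows the Dai--Fu--Yang strategy \cite{DFY24}, with the nonnegativity hypothesis weakened to $k$-nonnegativity.

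\textbf{Step 1: Bochner formula for $W$.} With $\delta W=0$, the Weitzenb\"ock formula gives
\[
\tfrac12\Delta|W|^2=|\nabla W|^2+\langle \mathcal{R}(W),W\rangle ,
\]
where the curvature term is quadratic in $\Rm$ contracted against $W$. Following \cite{NPW23,DF24}, we rewrite $\langle\mathcal{R}(W),W\rangle$ in terms of the action of $\mathring{R}$ on the space $S^2_0$ of traceless symmetric 2-tensors. Concretely, it is an average over the tensors $W(\cdot,e_i,e_j,\cdot)$ and their symmetrizations, viewed as elements of $S^2_0$. The $k$-nonnegativity of $\mathring{R}$ is then used through the standard lemma: if $\mathring{R}$ is $k$-nonnegative and the ``weights'' of a decomposition satisfy $\max_i\mu_i\le \frac1k\sum_i\mu_i$, then $\sum_i\mu_i\langle\mathring{R}\varphi_i,\varphi_i\rangle\ge0$.

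The constant $\frac{3(n-1)(n+2)}{4(3n-1)}$ should come from optimizing this weight ratio for Weyl-type tensors. This uses the algebraic identities for $W$ (Bianchi, total tracelessness) and the Kato-type improvement $|\nabla W|^2\ge \frac{n+1}{n-1}|\nabla|W||^2$ valid under $\delta W=0$. The target is
\[
\langle\mathcal R(W),W\rangle\ge c\,\bigl(\text{a }k\text{-sum of eigenvalues}\bigr)|W|^2\ge 0 ,
\]
with an extra term that is a positive multiple of $\scal|W|^2$. Integrating over the compact manifold gives $\nabla W=0$, together with $\langle \mathcal R(W),W\rangle=0$ pointwise.

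\textbf{Step 2: scalar curvature and the Ricci part.} A $k$-nonnegative $\mathring{R}$ with $k<\frac{n(n+1)}{2}-1$ is known to give $\scal\ge0$. More precisely, Li \cite{Li22} shows that $\mathring{R}$ being $k$-nonnegative for moderate $k$ forces nonnegative sectional and Ricci curvature-type conditions (PIC1 in a weak sense). Using the equality case of Step 1, either $W\equiv0$ or $\scal\equiv0$ where $W\ne0$. Since $|W|$ is constant by parallelism, the second alternative holds globally. If $W\equiv0$, the manifold is locally conformally flat with Codazzi Schouten tensor. If $\scal\equiv 0$, the nonnegativity forces $\Ric\ge0$ with trace zero, so $\Ric=0$; then $\delta W=0$ means harmonic curvature, and the Einstein results \cite{NPW23,DF24} for $k$-nonnegative $\mathring{R}$ give flatness. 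Note that our $k$ is below their thresholds for $n\ge8$; this should be checked.

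\textbf{Step 3: the locally conformally flat case.} With $W=0$ and $A$ Codazzi, we run the Bochner argument on the traceless Ricci tensor $\Riczero$:
\[
\tfrac12\Delta|\Riczero|^2=|\nabla\Riczero|^2+\langle\mathring{R}\text{-term},\Riczero\rangle+\dots
\]
The curvature term for $W=0$ is expressible via the eigenvalues of $\Ric$. Since $\mathring{R}$ acting on $\Riczero$ itself is controlled by $k$-nonnegativity, we conclude that $\Riczero$ is parallel. Hence either the metric is Einstein, giving constant curvature (positive or flat), or $\scal$ is constant and $\Ric$ has exactly two parallel eigenspaces, so $M$ is locally a product $S^p\times H^{n-p}$ or $S^{n-1}\times\mathbb R$-type. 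For the hyperbolic-type product, checking $\mathring{R}$ on explicit traceless tensors mixing the factors gives negative enough eigenvalues to violate $k$-nonnegativity. For $S^{n-1}\times S^1$, which is conformally flat, $\mathring R$ is $k$-nonnegative only for large $k$; we verify that it fails for our $k$. Alternatively, if the constant in Step 1 was chosen to prevent $\scal>0$ with $\Riczero\neq 0$, we can invoke Schoen/Kuiper: a compact conformally flat manifold with positive scalar curvature and finite fundamental group (by Bonnet--Myers) is conformal to a spherical space form, which gives the conclusion ``globally conformally equivalent.''

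\textbf{Main obstacle.} The hard part is Step 1: the sharp algebraic estimate that, for an algebraic Weyl tensor $W$, the decomposition of $\langle\mathcal R(W),W\rangle$ into $\langle\mathring{R}\varphi,\varphi\rangle$ terms has weight ratio at most $k=\frac{3(n-1)(n+2)}{4(3n-1)}$. I would first try the decomposition of \cite{NPW23}, namely $\varphi=W(\cdot,e_i,e_j,\cdot)$ symmetrized over orthonormal frames diagonalizing pieces of $W$. I would then estimate the largest weight by $\frac{3(n-1)}{\dots}|W|^2$ via $|W(\cdot,X,Y,\cdot)|^2\le\frac{\dots}{\dots}|W|^2$ for unit $X,Y$. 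The restriction $n\ge8$ presumably enters exactly here, in comparing this bound with the scalar-curvature correction term.
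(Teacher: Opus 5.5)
Your Step 1 is the substance of the theorem, and the proposal does not supply it. You state a target and guess at ingredients, but you produce no identity from which the ratio $\frac{3(n-1)(n+2)}{4(3n-1)}$ comes out. The missing idea is an auxiliary family of traceless test tensors that converts the leftover curvature terms into an $\mathring{R}$-quadratic form. Set $\alpha(W)=R_{sjti}W_{sjkl}W_{tikl}$ and $\beta(W)=R_{sikt}W_{sjkl}W_{ijtl}$. Then $\delta W=0$ gives $\langle\Delta W,W\rangle=2R_{lt}W_{ijkl}W_{ijkt}-\alpha(W)-4\beta(W)$. The tensors $S^{ij}=\frac12\sum_{k,l}(W_{iklj}+W_{ilkj})e_k\odot e_l$ satisfy $\sum_{i,j}\langle\mathring{R}(S^{ij}),S^{ij}\rangle=\frac12\alpha(W)-4\beta(W)$. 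The Dai--Fu formula \cite{DF24} gives $\langle\mathring{R}(W^{S^2_0}),W^{S^2_0}\rangle=\frac{2n+32}{n}R_{st}W_{sjkl}W_{tjkl}-5\alpha(W)+4\beta(W)-\frac{16}{n^2}s|W|^2$. Eliminating $\alpha$ and $\beta$ yields
\[
3\langle\Delta W,W\rangle=\langle\mathring{R}(W^{S^2_0}),W^{S^2_0}\rangle+4\sum_{i,j}\langle\mathring{R}(S^{ij}),S^{ij}\rangle+\frac{4(n-8)}{n}R_{lt}W_{ijkl}W_{ijkt}+\frac{16}{n^2}\,s|W|^2 .
\]
So $n\ge 8$ enters through the sign of the Ricci coefficient, which is what allows the bound $\Ric\ge[\mathring{R},\frac{n}{n+2},n-1]$ to be used. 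The refined Kato inequality plays no role: the ratio is a pointwise algebraic quantity, and $\frac12\Delta|W|^2\ge|\nabla W|^2$ together with the maximum principle suffices. Adding $[\mathring{R},\frac{8(n-2)}{n},\frac{2(n^2+n-8)}{n}]$, $12[\mathring{R},1,1]$, $\frac{4(n-8)}{n}[\mathring{R},\frac{n}{n+2},n-1]$ and $\frac{32}{n(n+2)}[\mathring{R},1,\frac{(n-1)(n+2)}{2}]$ gives $[\mathring{R},\frac{8(3n-1)}{n+2},6(n-1)]|W|^2$, whose ratio is exactly your $k$.

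This bookkeeping also shows that Step 2 rests on a term that is not there. To reach that ratio, the scalar term has to be converted entirely into a weighted sum. Setting aside any fraction $t>0$ of $\frac{16}{n^2}s|W|^2$ multiplies the ratio $S/\Omega$, and hence the admissible $k$, by $\frac{1-8t/(3n)}{1-4t/(n(3n-1))}<1$. So at the stated $k$ you get only $\nabla W=0$, not the dichotomy ``$W\equiv0$ or $s\equiv0$'', and the case of parallel $W\neq0$ with $s>0$ is left open. The paper closes it in four steps:
\begin{enumerate}
\item Derdzi\'nski--Roter \cite{DR}: parallel Weyl implies locally conformally flat or locally symmetric.
\item Li's theorem \cite{Li24}: this $k$-nonnegativity forces local irreducibility unless $M$ is flat.
\item An irreducible locally symmetric metric is Einstein.
\item Einstein rigidity for $\mathring{R}$ then gives constant curvature.
\end{enumerate}

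In the conformally flat case, your main Step 3 argument cannot work. The metric $e^{2f}g_{S^n}$ with $f$ small in $C^2$ has $W=0$ and $\mathring{R}>0$, but for generic $f$ its Ricci tensor is not parallel. That is exactly why the conclusion is only ``globally conformally equivalent''. Your fallback is the right route, but Bonnet--Myers needs a positive Ricci lower bound. You only have $\Ric\ge\frac{s}{n(n+1)}\ge 0$ (from $\Ric\ge\frac{n-1}{n+1}[\mathring{R},1,n]+\frac{s}{n(n+1)}$, since $k<n$), and $s$ may vanish somewhere. Use Cheeger--Gromoll instead, or, as the paper does, the classification of compact conformally flat manifolds with $\Ric\ge0$ \cite{CH,Z}. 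The $\mathbb{R}\times S^{n-1}$ quotients must then be excluded. Your eigenvalue check does that correctly: the eigenvalues are $-\frac{n-2}{n}$ once and $0$ with multiplicity $n-1$, so $k$-nonnegativity there requires $k\ge n+\frac{n-2}{n}$.
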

	\begin{corollary}
		Let $(M, g)$ be an $n(\ge 8)$(resp., $11$)-dimensional compact Riemannian manifold with harmonic Weyl curvature tensor. If the curvature operator of the second kind $\mathring{R}$ is $2$(resp., $3$)-nonnegative, then $M$ is either globally conformally equivalent to a space of positive constant curvature or is isometric to a flat manifold.
	\end{corollary}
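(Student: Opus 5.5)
The corollary follows directly from Theorem \ref{A} by a monotonicity argument in the parameter $k$, together with a check of the threshold $\frac{3(n-1)(n+2)}{4(3n-1)}$ in the stated dimensions.

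First, we record the monotonicity. If $\mathring{R}$ is $k$-nonnegative and $k\le k'$, with $k'$ not exceeding the dimension $N=\frac{(n-1)(n+2)}{2}$ of $S^2_0(T_pM)$, then $\mathring{R}$ is also $k'$-nonnegative. Indeed, let $f(t)=\lambda_1+\cdots+\lambda_{[t]}+(t-[t])\lambda_{[t]+1}$ be the piecewise linear interpolation of the partial sums, with the eigenvalues in nondecreasing order. Its slope on $[j,j+1]$ is $\lambda_{j+1}$, and these slopes are nondecreasing in $j$, so $f$ is convex with $f(0)=0$. Hence $f(t)/t$ is nondecreasing in $t$. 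It follows that $f(k)\ge 0$ implies $f(k')\ge \frac{k'}{k}f(k)\ge 0$. This is the standard fact used throughout \cite{NPW23,DF24}.

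It therefore suffices to show $2\le \frac{3(n-1)(n+2)}{4(3n-1)}$ for $n\ge 8$, and $3\le \frac{3(n-1)(n+2)}{4(3n-1)}$ for $n\ge 11$. The first inequality is equivalent to $8(3n-1)\le 3(n^2+n-2)$, that is, $3n^2-21n+2\ge0$. At $n=8$ the left side equals $192-168+2=26>0$. The quadratic is increasing for $n\ge 4$, so the inequality holds for all $n\ge8$. The second inequality is equivalent to $12(3n-1)\le 3(n^2+n-2)$, that is, $n^2-11n+2\ge 0$. At $n=11$ the left side equals $2>0$, and the quadratic is increasing for $n\ge 6$, so the inequality holds for all $n\ge 11$.

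Applying Theorem \ref{A} with these threshold comparisons yields the conclusion. There is no real obstacle here; the only care needed is that the comparison is made with the exact fractional threshold $\frac{3(n-1)(n+2)}{4(3n-1)}$, using the fractional definition of $k$-nonnegativity. This threshold is well below $N$ in the relevant range, so the monotonicity argument applies.
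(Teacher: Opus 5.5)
Your proposal is correct and is the same argument the paper relies on. The paper states the corollary without proof as an immediate consequence of Theorem~\ref{A}: $k$-nonnegativity is monotone in $k$, and the threshold $\frac{3(n-1)(n+2)}{4(3n-1)}$ is at least $2$ for $n\ge 8$ (it equals $\frac{210}{92}$ at $n=8$) and at least $3$ for $n\ge 11$ (it equals $\frac{390}{128}$ at $n=11$). Your convexity argument for the monotonicity and your two quadratic checks supply exactly these details.
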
	
	
	\begin{maintheorem}\label{B}
		Let $(M, g)$ be an $n(= 5,7)$-dimensional compact Riemannian manifold with harmonic Weyl curvature tensor. $\mathring{R}$
		be the curvature operator of the second kind. If\\
		(i)$n=5$ and $\mathring{R}$ is $\frac{252}{169}$-nonnegative;\\
		(ii)$n=7$ and $\mathring{R}$ is $\frac{54}{35}$-nonnegative,\\
		then $M$ is either globally conformally equivalent to a space of positive constant curvature or is isometric to a flat manifold.
	\end{maintheorem}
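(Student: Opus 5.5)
We argue exactly as in the Bochner-technique proofs of \cite{DFY24,FLD}, but tune the algebraic estimates to $n=5$ and $n=7$. Since $W$ is harmonic, the Cotton tensor vanishes. Hence $\Ric-\frac{\scal}{2(n-1)}g$ is a Codazzi tensor, and for $n\ge 4$ the scalar curvature is constant after the standard argument. The traceless Ricci tensor $\Riczero$ is then a traceless Codazzi tensor, and the Weyl tensor satisfies $\delta W=0$.

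\textbf{Step 1: Bochner formulas.} We use the Weitzenböck formulas for $\Riczero$ (viewed as a symmetric two-tensor) and for $W$ (an algebraic curvature tensor):
\begin{align*}
\tfrac12\Delta|\Riczero|^2 &= |\nabla\Riczero|^2 + \langle \mathring{R}(\Riczero),\Riczero\rangle\text{-type terms},\\
\tfrac12\Delta|W|^2 &\ge |\nabla W|^2 + \text{(quadratic form in }W\text{ built from }\mathring{R}).
\end{align*}
We then integrate over the compact manifold $M$. For the $\Riczero$ term, Codazzi-ness lets us rewrite the curvature term as $\sum_{i}\lambda_i|\Riczero_i|^2$, where the $\lambda_i$ are the eigenvalues of $\mathring{R}$ on $S^2_0$. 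For the Weyl term, we use the \cite{NPW23,DF24} form $\sum_\alpha \lambda_\alpha|S_\alpha W|^2$, with $S_\alpha$ an orthonormal eigenbasis of $S^2_0$.

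\textbf{Step 2: weighted averaging.} The key algebraic input is an estimate of the form
\[
\sum_\alpha |S_\alpha T|^2 \le C_n(T)\,|T|^2 \quad\text{for each single } \alpha,
\]
bounded by a fraction $\frac1{k}\sum_\alpha|S_\alpha T|^2$. This makes $\sum\lambda_\alpha|S_\alpha T|^2\ge 0$ whenever $\mathring{R}$ is $k$-nonnegative. For $T=W$ and $T=\Riczero$ the admissible $k$ differ, and we combine the two curvature terms in the Bochner formula for $\Rm$ itself (equivalently, for $W+\frac{1}{n-2}\Riczero\owedge g$) with a free weight. Concretely, we use refined Kato inequalities for harmonic Weyl and Codazzi tensors, $|\nabla W|^2\ge\frac{n+1}{n-1}|\nabla|W||^2$ and the analogue $\frac{n+2}{n-1}$ for $\Riczero$. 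These allow us to work with $|W|^{\alpha}$ and $|\Riczero|^{\beta}$ for suitable exponents, and the integral then forces the total to vanish. The resulting threshold $k$ is a rational function of $n$. We plan to optimize the weight and exponents for $n=5,7$ directly, and expect to obtain $\frac{252}{169}$ and $\frac{54}{35}$, numbers that beat the general-$n$ formula in these low dimensions. The odd dimensions matter because the extremal eigenvalue configurations of $\mathring{R}$ on $S^2_0(\mathbb{R}^n)$ differ by parity.

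\textbf{Step 3: rigidity.} Equality in the integral forces $\nabla W=0$, $\nabla \Riczero=0$, and equality in the algebraic estimate. Equality there gives $W=0$, or else $\Riczero=0$ together with a curvature restriction. If $W=0$ and $\Riczero$ is parallel, then de Rham splitting with $k$-nonnegativity yields flat or constant curvature. Otherwise, when $W=0$ with constant scalar curvature, the Tachibana/\cite{DFY24} argument gives either global conformal equivalence to a space of positive constant curvature or a flat manifold. The main obstacle is Step 2: computing the sharp constant in $|S_\alpha W|^2\le c|W|^2$ and the cross terms, and choosing weights so that both inequalities close simultaneously at exactly these $k$ in $n=5,7$. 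We would first try solving the resulting two-parameter optimization explicitly.
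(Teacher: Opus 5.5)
There is a genuine gap, and it starts with your opening claim. Harmonic Weyl curvature ($\delta W=0$, i.e.\ vanishing Cotton tensor) only says that the Schouten tensor $\frac{1}{n-2}\bigl(\Ric-\frac{s}{2(n-1)}g\bigr)$ is Codazzi. The Cotton tensor is totally trace-free, so this says nothing about $ds$, and $\Riczero$ is Codazzi only when $s$ is constant; you are using the harmonic-curvature setting. Concretely, take a small nonconstant conformal deformation $e^{2f}g_{\mathbb{S}^n}$ of the round metric. It has $W=0$, and $\mathring{R}$ stays close to the positive operator of the round sphere, so every hypothesis holds. But generically its scalar curvature is nonconstant and $\Riczero$ is neither Codazzi nor parallel; such metrics are exactly why the conclusion says ``globally conformally equivalent''. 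So the Weitzenb\"ock formula for $\Riczero$ in your Step 1 is not available, the conclusion $\nabla\Riczero=0$ in Step 3 is false, and the Ricci half of your scheme must be dropped. The paper works with $W$ alone. From $\delta W=0$ one gets $W_{ijkl,h}+W_{ijlh,k}+W_{ijhk,l}=0$, hence $\langle\Delta W,W\rangle=2R_{lt}W_{ijkl}W_{ijkt}-\alpha(W)-4\beta(W)$. Once this is shown to be $\ge 0$ pointwise, $\frac12\Delta|W|^2\ge|\nabla W|^2$ gives $\nabla W=0$ directly, with no refined Kato inequalities or exponents. Derdzi\'{n}ski--Roter and the classification of locally conformally flat manifolds then finish as in the case $n\ge 8$.

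Second, your Step 2 does not produce the thresholds: $\frac{252}{169}$ and $\frac{54}{35}$ are announced, not derived, and parity of eigenvalue configurations is not the mechanism. For harmonic Weyl curvature the curvature term is not just $\sum_\alpha\lambda_\alpha|S^{\alpha}W|^2$. The identity $3\langle\Delta W,W\rangle=\langle\mathring{R}(W^{S_0^2}),W^{S_0^2}\rangle+\frac{4(n-8)}{n}R_{lt}W_{ijkl}W_{ijkt}+4\sum_{i,j}\langle\mathring{R}(S^{ij}),S^{ij}\rangle+\frac{16}{n^2}s|W|^2$ contains a Ricci--Weyl term whose coefficient is negative for $n<8$, and controlling it is the whole difficulty.

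For $n=5$ the paper uses the Jack--Parker identity $W_{sjti}W_{sjkl}W_{tikl}=2W_{sikt}W_{sjkl}W_{ijtl}$, available only for $n\le 5$. This gives the exact formula $\langle\Delta W,W\rangle=\frac59\langle\mathring{R}(W^{S_0^2}),W^{S_0^2}\rangle+\frac89\sum_{i,j}\langle\mathring{R}(S^{ij}),S^{ij}\rangle+\frac1{45}s|W|^2$, with no Ricci term. The weighted sums $[\mathring{R},\frac83,\frac{44}9]$, $[\mathring{R},\frac83,\frac83]$ and $[\mathring{R},\frac2{63},\frac49]$ then add up to $S/\Omega=8\cdot\frac{63}{338}=\frac{252}{169}$.

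For $n=7$, $\frac{54}{35}$-nonnegativity implies $\frac{(n-1)(n-2)}{2}=15$-nonnegativity. This forces $\Ric\le\frac s2$: sum $\langle\mathring{R}\psi_{kl},\psi_{kl}\rangle$ over $\psi_{kl}=\frac1{\sqrt2}e_k\odot e_l$ with $2\le k<l\le n$. Hence $-\frac47R_{lt}W_{ijkl}W_{ijkt}+\frac{16}{49}s|W|^2\ge\frac2{49}s|W|^2$, and the weights give $S/\Omega=\frac{192/7}{160/9}=\frac{54}{35}$.

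At $n=6$ neither device works: there is no Jack--Parker identity, and the same substitution leaves $-\frac29 s|W|^2$. That is why this dimension is excluded.
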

	\begin{remark}
		Theorem~\ref{A} and \ref{B} generalize the result of Dai, Fu and Yang \cite{DFY24}, who obtained the same conclusion under non-negative conditions. However, Our method does not apply to the case $n=6$. Whether Dai-Fu-Yang’s result can be improved to achieve optimality remains an interesting and open problem. For $n(\ge 8)$(resp., $11$), Theorem~\ref{A} can be regarded
as a refined version of their theorem due to \cite{CGT23,Li24}.
	\end{remark}
	
	Furthermore, we obtain a  classification of four-dimensional Riemannian manifolds with harmonic Weyl curvature.
	
	\begin{maintheorem}\label{C}
		Let $(M,g)$ be a $4$-dimensional compact manifold with harmonic Weyl curvature  whose the curvature operator of the second kind $\mathring{R}\,$ satisfies 
		\begin{equation}\label{cone}
			\lambda_{1}+\lambda_{2}+\lambda_{3}\ge -3\bar{\lambda},
		\end{equation}
where $\bar{\lambda}$ represents the average value of all the eigenvalues of $\mathring{R}\,$. 
		Then  one of the following must be true:\\
		(1) $M$ is conformal flat.\\
		(2) up to rescaling,  $M$ is $\mathbb{CP}^2$ with the Fubini–Study metric.\\
		(3) up to rescaling, $M$ is isometric to a quotient of $\Sigma_1^2\times \Sigma_2^2$ with the product metric, where $k_i(i=1,2)$ is constant Gaussian curvature of $\Sigma_i^2$  and $\frac{s}{2}=k_1+k_2>0$.
	\end{maintheorem}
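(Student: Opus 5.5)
We will prove Theorem~\ref{C} with the Bochner technique for harmonic Weyl tensors, combined with the four-dimensional splitting $W=W^+ + W^-$.

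\textbf{Step 1: constant scalar curvature, or else conformal flatness.} Harmonic Weyl curvature means $\delta W=0$, equivalently that the Schouten tensor is Codazzi. We first argue that either $W\equiv 0$, giving case (1), or the scalar curvature $s$ is constant on the open set where $W\neq 0$. For this we use the Derdzi\'nski--Bourguignon structure theory for $\delta W=0$ in dimension four. In the constant-$s$ case $g$ has harmonic curvature, so $\delta \Rm=0$.

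\textbf{Step 2: Bochner identity for $W^\pm$ and the algebraic inequality.} In dimension four, $\delta W^\pm=0$ holds separately. Each $W^\pm$ satisfies the Weitzenb\"ock formula
\[
\tfrac12\Delta|W^\pm|^2=|\nabla W^\pm|^2+\tfrac{s}{2}|W^\pm|^2-18\det W^\pm ,
\]
with $W^\pm$ viewed as traceless endomorphisms of $\Lambda^\pm$. The key algebraic step is to translate hypothesis \eqref{cone} into this language. We write $\mathring{R}$ on $S^2_0(\mathbb{R}^4)\cong \Lambda^+\otimes\Lambda^-$ (dimension $9$), with $\bar\lambda=s/(n(n-1))\cdot$const. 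Its eigenvalues are expressed through the eigenvalues $a_i$ of $W^+$, the eigenvalues $b_j$ of $W^-$, and the traceless Ricci tensor: up to normalization, the Weyl part of $\mathring{R}$ acts on $\Lambda^+\otimes\Lambda^-$ as $W^+\otimes 1+1\otimes W^-$. Choosing test subspaces of dimension three (for instance $e_i\otimes f_j$ with appropriate indices) turns $\lambda_1+\lambda_2+\lambda_3\ge -3\bar\lambda$ into inequalities like $a_1+b_1+\cdots\ge -s/\text{const}$. These give $\tfrac s2|W^\pm|^2-18\det W^\pm\ge 0$, and we expect the same choices to control the Ricci terms.

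\textbf{Step 3: integrate and classify.} Integrating over the compact $M$ gives $\nabla W^\pm=0$ and equality in the algebraic inequality. A parallel $W^+$ is either zero or has two equal eigenvalues with $s>0$, and then $M$ is locally K\"ahler (Derdzi\'nski). The same holds for $W^-$.
\begin{itemize}
\item If both vanish, we are in case (1).
\item If exactly one is nonzero, the metric is K\"ahler with harmonic curvature and constant $s$, hence K\"ahler--Einstein or a product. Equality in the cone condition should then force self-duality and Einstein, leaving $\mathbb{CP}^2$: case (2).
\item If both are nonzero and parallel, the manifold has two complex structures of opposite orientation. It is locally a product of two surfaces with constant Gaussian curvatures $k_1,k_2$, since harmonic curvature forces constant $k_i$. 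Checking the eigenvalues of $\mathring{R}$ on such products shows that \eqref{cone} is exactly $k_1+k_2>0$, with equality structure consistent: case (3).
\end{itemize}

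\textbf{Main obstacle.} The hard part is Step 2: the precise eigenvalue computation of $\mathring{R}$ in dimension four, including the traceless Ricci contribution. We must verify that the cone condition yields nonnegativity of the full Bochner term, including Ricci cross terms such as $\langle \Riczero\circ\Riczero, W\rangle$ appearing in the formula for $\Delta|\Riczero|^2$. If the Ricci part cannot be absorbed directly, the fallback is to use the harmonic-curvature Bochner formula for $\Rm$ itself, as in \cite{FLD}. We would then estimate it through the $3$-eigenvalue sum via the Nienhaus--Petersen--Wink averaging over orthonormal frames of $S^2_0$.
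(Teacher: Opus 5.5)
Your architecture matches the paper's: the Weitzenb\"ock formula for $W^\pm$ under $\delta W^\pm=0$, a pointwise algebraic consequence of the cone condition, the maximum principle, Derdzi\'nski's K\"ahler criterion, and a classification. But there is a genuine gap. Step 2, on which the whole proof rests, is only announced, and as set up it points in the wrong direction.

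\textbf{What Step 2 needs.} The missing input is the Cao--Gursky--Tran description of $\mathring{R}$ in dimension four. Take the orthonormal basis $\{\omega_i\eta_j\}$ of $S^2_0\cong\Lambda^+\otimes\Lambda^-$ built from eigenvectors of $W^\pm$. In this basis the diagonal entries of $\mathring{R}$ are $\lambda_{ij}=\frac{s}{12}-a_i-b_j$, and $\bar\lambda=\frac{s}{12}$. The traceless Ricci tensor contributes nothing to these diagonal entries. Indeed, normalize $\omega_i^2=\eta_j^2=-\mathrm{id}$ and set $h=\omega_i\eta_j$; then $h^2=\mathrm{id}$, and the Ricci part of $\langle\mathring{R}h,h\rangle$ is a multiple of $\tr(\Riczero\,h^2)=0$. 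Now apply the min--max principle to the three-dimensional test space spanned by a full row $\omega_3\eta_1,\omega_3\eta_2,\omega_3\eta_3$. The $b_j$ drop out because $\sum_j b_j=0$, and
\[
\tfrac{s}{4}-3a_3=\textstyle\sum_j\lambda_{3j}\ \ge\ \lambda_1+\lambda_2+\lambda_3\ \ge\ -3\bar\lambda=-\tfrac{s}{4}.
\]
Hence $a_3\le\frac s6$, and likewise $b_3\le\frac s6$. Since $a_3\ge0$, this gives $s\ge0$, and $W=0$ wherever $s=0$. An elementary check then shows that $\sum a_i=0$ and $\max a_i\le\frac s6$ imply $s\sum a_i^2-36a_1a_2a_3\ge0$. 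This quantity is, up to a factor $2$, the reaction term $\frac s2|W^+|^2-72\det W^+$. Equality holds only at $a=0$ or $a=(-\frac{s}{12},-\frac{s}{12},\frac s6)$.

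\textbf{The direction problem.} What you need is an \emph{upper} bound on the largest eigenvalue of $W^\pm$. Your sign convention ($W^+\otimes1+1\otimes W^-$) and your sample inequality ``$a_1+b_1+\cdots\ge -s/\mathrm{const}$'' produce lower bounds on the smallest eigenvalues, and these do not suffice. For example, $a=(-\frac t2,-\frac t2,t)$ with $\frac s6<t\le\frac s3$ satisfies $a_1\ge-\frac s6$, yet $s\sum a_i^2-36a_1a_2a_3=\frac32t^2(s-6t)<0$. $\mathbb{CP}^2$ fixes the sign: there the row belonging to the simple eigenvalue $\frac s6$ of $W^+$ carries the negative eigenvalue of $\mathring{R}$.

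\textbf{The Ricci worry and Step 1.} There are no Ricci terms to absorb. The Weitzenb\"ock formula for $W^\pm$ needs only $\delta W^\pm=0$ and involves only $s$ and $W^\pm$. So the concern about $\langle\Riczero\circ\Riczero,W\rangle$, and the fallback to a harmonic-curvature Bochner formula, are beside the point. That fallback would also rest on Step 1. You do not justify Step 1's claim that $s$ is constant where $W\neq0$, and you do not need it. Constancy of $s$ comes out at the end: $\nabla W^+=0$ makes $|W^+|$ constant, and in the equality configuration $|W^+|^2$ is a fixed multiple of $s^2$.

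\textbf{Step 3.} Once Step 2 is supplied, Step 3 goes through. Your splitting argument via two parallel K\"ahler forms of opposite orientation is a fine substitute for the paper's route. The paper argues that a K\"ahler metric with constant $s$ and $\delta W=0$ has parallel Ricci tensor, hence $\nabla R=0$, and then uses the list of symmetric $4$-spaces. When exactly one of $W^\pm$ vanishes, you need no ``equality in the cone condition''. Self-duality is the case hypothesis, and products of surfaces are excluded because they have $W^+\neq0\neq W^-$ when $s>0$.
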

	\begin{remark}
		For $4$-dimensional manifolds, this theorem improves upon \cite[Theorem D]{FL} and their conclusions due to \cite{FLD}. This theorem can be regarded as an optimal rigidity result under certain conditions.
	\end{remark}
	The paper is organized as follows. In Sect. 2, we give an introduction to the curvature
	operator of the second kind and some Lemmas. In Sect. 3, we present a refined Bochner formula for manifold with harmonic Weyl curvature and prove Theorem A. In Sect. 4, we prove Theorem B. The proofs of Theorems C are presented in Sect. 5.
	
	\section{Preliminaries}
	
	\subsection{Notion and Lemmas}
	In a Riemannian manifold $(M^n,g)$, the Riemannian curvature tensor $R$  induces two symmetric linear operators $\hat{R}$ and $\mathring{R}$. Denote by $V$ be the tangent space of this Riemannian manifold, by ${\Lambda }^2V$  the space of skew symmetric $2$-tensor over $V$, and by $S_0^2(V)$  the space of trace-free symmetric $2$-tensor over $V$. The curvature operator of the first kind is defined by
	\begin{align*}
		& \hat{R}:{\Lambda }^2V\to {\Lambda }^2V \\ 
		& \hat{R}(e_i\wedge e_j)=\frac{1}{2}\sum\limits_{k,l}{R_{ijkl}e_k\wedge e_l},
	\end{align*}
	and the curvature operator of the second kind is defined in \cite{CGT23, NPW23} by
	$$\mathring{R}={\Pr}_{s_0^2(V)}\circ \left.\bar{R} \right|_{S_0^2(V)},$$
	where the operator $\bar{R}$ act on $S^2(V)$ induced from Riemannian curvature tensor $R$ is defined by
	$$\bar{R}(e_i\odot e_j)=\sum\limits_{k,l}{R_{kijl}e_k\odot e_l},$$
	where $e_i\odot e_j=e_i\otimes e_j+e_j\otimes e_i$. 
	The inner product on $S^2(V)$ is defined by
	\[\left\langle A, B \right\rangle= tr(A^T B),\]
	Then $\frac{1}{\sqrt{2}}\{e_i\odot e_j\}_{1\le i<j\le n},\frac{1}{2}\{e_i\odot e_i\}_{i=1,\dots,n}$ is an orthonormal basis for $S^2(V)$. The operator $\mathring{R}$ can be regarded as the curvature operator $\bar{R}$ and its image restricts on $S_0^2(V)$. 
	
	By identifying symmetric $(0,2)$-tensors with self-adjoint endomorphisms of $V$, Nienhaus, Petersen and Wink gives the following definition in \cite{NPW23}.
	\begin{definition}
		Let $T^{(0,k)}(V)$ denote the space of $(0,k)$-tensor space on $V$. For $S\in S^2(V)$ and $T\in T^{(0,k)}(V),$ we define
		\begin{align*}
			& S:T^{(0,k)}(V)\to T^{(0,k)}(V) \\ 
			& (ST)(X_1,\cdots ,X_k)=\sum\limits_{i=1}^{k}{T(X_1,\cdots ,SX_i,\cdots ,X_k)} 
		\end{align*}
		and define $T^{S^2}\in T^{(0,k)}(V)\otimes S^2(V)$ by
		$$\left\langle T^{S^2}(X_1,\cdots ,X_k),S \right\rangle =(ST)(X_1,\cdots ,X_k).$$
	\end{definition} 
	Hence, if $\left\{ S^\alpha \right\}$ is an orthonormal basis for $S_0^2(V)$, then
	\[T^{S_0^2}=\sum\limits_{\alpha=1}^{N}{S^\alpha T\otimes S^\alpha}.\]
	
	Let $\{{\lambda }_\alpha\}$ denote the eigenvalues of the operator $\mathring{R}\,$  and $\left\{ S^\alpha \right\}$ be the corresponding orthonormal eigenbasis of $S_{0}^2(V)$. In what follows, the eigenvalues $\{{\lambda}_\alpha\}$ are assumed to be arranged in non‑decreasing order. Define the action of $\mathring{R}\,$ on the space $T^{S_{0}^{2}}$ as
	\[\mathring{R}\,(T^{S_{0}^{2}})=\sum\limits_{\alpha}{S^\alpha T\otimes \mathring{R}\,(S^{\alpha}),}\]
	then
	\[\left\langle \mathring{R}\,(T^{S_{0}^{2}}),T^{S_{0}^{2}} \right\rangle =\sum\limits_{\alpha}{{\lambda }_\alpha\left| S^{\alpha}T \right|^2}.\]
	\subsection{Weighted principle}		
	To analyze the individual terms of Bochner formula, we apply the powerful weighted-sum calculus developed in \cite{NPW23}, which allows us to estimate finite weighted sums with nonnegative weights.
	\begin{definition}(\cite[Definition 3.1]{NPW23})
		Let $\{\omega_{i}\}_{\alpha=1}^{N}$ be the nonnegative weights of any finite weighted sums. Define
		\[
		\Omega=\max_{1\leq \alpha\leq N}\omega_{\alpha} \quad \mathrm{and} \quad  S=\sum_{\alpha=1}^{N}\omega_{\alpha}\,.
		\]
		We call $S$ the total weight and $\Omega$ the highest weight.
	\end{definition}
	In particular, if $F(R)$ is a (geometric) quantity depending on curvature tensor $R$ and $\mathring{R}$ is curvature operator of the second kind, then we will write
	$$F(R) \ge [\mathring{R},\Omega, S]$$
	provided $F(R)$ is bounded from below by a weighted sum in terms of the eigenvalues of $\mathring{R}$ with highest weight $\Omega$ and total weight $S$.
	
	Let $\{\lambda_{\alpha}\}_{\alpha=1}^{N}$ be the eigenvalues of $\mathring{R}\,$. Following the notation in \cite{NPW23}, we write $[\mathring{R},\Omega,S]$ to denote any finite weighted sums $\sum_{\alpha=1}^{N}\omega_{\alpha}\lambda_{\alpha}$ in terms of $\{\lambda_{\alpha}\}_{\alpha=1}^{N}$ whose weights satisfy highest weight $\Omega$ and total weight $S$. %We have the following
	\begin{lemma}\label{Lem 3.4}(\cite[Lemma 3.4]{NPW23})
		Let  $\{\lambda_{\alpha}\}_{\alpha=1}^{N}$ denote the ordered eigenvalues of $\mathring{R}$. Then, for any integer $1\leq m\leq N$, we have
		$$[\mathring{R},\Omega,S]\geq(S-m\Omega)\lambda_{m+1}+\Omega\sum_{\alpha=1}^{m}\lambda_{i}.$$
		Therefore, $[\mathring{R},\Omega,S]\ge 0$ if $\mathring{R}$ is $\frac{S}{\Omega}$-nonnegative.
	\end{lemma}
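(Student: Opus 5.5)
Write the weighted sum as $\sum_{\alpha=1}^N \omega_\alpha\lambda_\alpha$, with $0\le\omega_\alpha\le\Omega$ and $\sum_\alpha\omega_\alpha=S$, and with $\lambda_1\le\dots\le\lambda_N$. Fix an integer $1\le m\le N$. (If $m=N$, read $\lambda_{N+1}$ as $\lambda_N$; the argument below goes through unchanged.)

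The plan is to compare the weighted sum with the one obtained by pushing as much weight as possible onto the smallest eigenvalues. Split the sum at index $m$:
\[
\sum_{\alpha}\omega_\alpha\lambda_\alpha=\sum_{\alpha\le m}\omega_\alpha\lambda_\alpha+\sum_{\alpha> m}\omega_\alpha\lambda_\alpha .
\]
For $\alpha>m$ we have $\lambda_\alpha\ge\lambda_{m+1}$ and $\omega_\alpha\ge0$. Hence the tail is at least $\bigl(S-\sum_{\alpha\le m}\omega_\alpha\bigr)\lambda_{m+1}$. This gives
\[
\sum_{\alpha}\omega_\alpha\lambda_\alpha\ \ge\ S\lambda_{m+1}+\sum_{\alpha\le m}\omega_\alpha(\lambda_\alpha-\lambda_{m+1}).
\]
For $\alpha\le m$ we have $\lambda_\alpha-\lambda_{m+1}\le0$ and $\omega_\alpha\le\Omega$, so $\omega_\alpha(\lambda_\alpha-\lambda_{m+1})\ge\Omega(\lambda_\alpha-\lambda_{m+1})$. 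Summing over $\alpha\le m$ yields
\[
(S-m\Omega)\lambda_{m+1}+\Omega\sum_{\alpha=1}^m\lambda_\alpha ,
\]
which is the first claim.

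For the second claim, let $k=S/\Omega$, and take $m=[k]$ when $k<N$. (If $\Omega=0$, the sum is trivially $0$.) Then $S-m\Omega=\Omega(k-[k])$, and the bound becomes $\Omega\bigl(\lambda_1+\dots+\lambda_{[k]}+(k-[k])\lambda_{[k]+1}\bigr)$. This is nonnegative by $k$-nonnegativity.

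The case $m=0$, i.e. $k<1$, is handled the same way: the weighted sum is at least $S\lambda_1=\Omega k\lambda_1\ge0$.

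Finally, $k\ge N$ forces $k=N$ and all weights to equal $\Omega$, since each $\omega_\alpha\le\Omega$. In that case the bound reads $\Omega\sum_\alpha\lambda_\alpha\ge0$, which is again exactly $N$-nonnegativity.

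The only delicate point is this index bookkeeping at the edge cases; the core estimate itself is elementary.
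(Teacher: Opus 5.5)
Your proof is correct and takes essentially the standard route. The paper gives no proof of its own; it cites Lemma 3.4 of Nienhaus--Petersen--Wink, where the bound comes from exactly your split at index $m$, using $\omega_\alpha\ge 0$ on the tail and $\omega_\alpha\le\Omega$ on the first $m$ terms. Your edge-case handling is also sound, and the case $S/\Omega<1$ is in fact vacuous when $\Omega$ is the actual maximal weight, since then $\Omega\le S$.
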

	The following lemma will be useful in our proof.
	\begin{lemma}\label{ComputationsWithWeights}(\cite[Lemma 3.3]{NPW23})
		Let $[\mathring{R}, \Omega, S], [\mathring{R}, \widetilde{\Omega}, \widetilde{S}]$ denote weighted sums of eigenvalues of $\mathring{R}$ with highest weights $\Omega, \widetilde{\Omega}$ and total weights $S, \widetilde{S}$, respectively.
		\begin{enumerate}
			\item If $c>0$, then 
			\begin{align*}
				[\mathring{R}, c \Omega, c S] = c \cdot [\mathring{R}, \Omega, S].
			\end{align*}
			\item If $\Omega \leq \widetilde{\Omega},$ then 
			\begin{align*}
				[ \mathring{R} , \Omega, S] \geq [ \mathring{R} , \widetilde{\Omega}, S].
			\end{align*}
			\item \begin{align*}
				[\mathring{R}, \Omega, S] + [\mathring{R}, \widetilde{\Omega}, \widetilde{S}] \geq [\mathring{R}, \Omega + \widetilde{\Omega}, S + \widetilde{S}].
			\end{align*}
		\end{enumerate}
	\end{lemma}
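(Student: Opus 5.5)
The plan is to read the bracket notation as the paper does after Lemma~\ref{Lem 3.4}: $[\mathring{R},\Omega,S]$ stands for \emph{some} sum $\sum_\alpha \omega_\alpha\lambda_\alpha$ with $\omega_\alpha\ge 0$, $\max_\alpha\omega_\alpha\le\Omega$ and $\sum_\alpha\omega_\alpha=S$. An inequality $[\cdot]\ge[\cdot]$ then means that the left-hand sum dominates some admissible sum of the second type. I treat $\Omega$ as an upper bound on the weights, as in Lemma~\ref{Lem 3.4}. If one insists that the maximum be attained exactly, the greedy construction below attains it whenever $\widetilde\Omega\le S$, which is the only meaningful case.

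For (a), given weights $\omega_\alpha$ with highest weight $\Omega$ and total $S$, the weights $c\,\omega_\alpha$ are nonnegative, have highest weight $c\Omega$ and total $cS$. Moreover $\sum c\,\omega_\alpha\lambda_\alpha=c\sum\omega_\alpha\lambda_\alpha$. This gives the equality in both directions.

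For (b), the key step is an Abel summation. Fix weights $\omega_\alpha$ with $\max\omega_\alpha\le\Omega\le\widetilde\Omega$ and total $S$. Define greedy weights by front-loading onto the smallest eigenvalues:
\[
\omega'_\alpha=\min\bigl(\widetilde\Omega,\;\max(0,\,S-(\alpha-1)\widetilde\Omega)\bigr),
\]
that is, $\widetilde\Omega$ on $\lambda_1,\dots,\lambda_m$, the remainder $S-m\widetilde\Omega$ on $\lambda_{m+1}$, and zero afterwards. These weights are admissible for $(\widetilde\Omega,S)$. Set the partial sums $W_k=\sum_{\alpha\le k}\omega_\alpha$ and $W'_k=\sum_{\alpha\le k}\omega'_\alpha$. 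Then $W'_k=\min(k\widetilde\Omega,S)\ge\min(k\Omega,S)\ge W_k$, and $W_N=W'_N=S$. Summation by parts gives
\[
\sum_\alpha\omega_\alpha\lambda_\alpha=\sum_{k=1}^{N-1}W_k(\lambda_k-\lambda_{k+1})+S\lambda_N .
\]
Since the eigenvalues are nondecreasing, each factor $\lambda_k-\lambda_{k+1}$ is $\le0$. Replacing $W_k$ by the larger $W'_k$ can therefore only decrease the sum, so $\sum\omega_\alpha\lambda_\alpha\ge\sum\omega'_\alpha\lambda_\alpha$, which is (b). This comparison of partial sums is the only nontrivial point. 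Its one subtlety is making sure the greedy weights stay admissible, namely nonnegative with the correct total, which the $\min/\max$ formula guarantees.

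For (c), add the two weight vectors. The weights $\omega_\alpha+\widetilde\omega_\alpha$ are nonnegative, bounded by $\Omega+\widetilde\Omega$, and sum to $S+\widetilde S$. If their actual maximum is smaller than $\Omega+\widetilde\Omega$, apply (b) to raise the highest weight to exactly $\Omega+\widetilde\Omega$ while only decreasing the sum. This yields $[\mathring{R},\Omega,S]+[\mathring{R},\widetilde\Omega,\widetilde S]\ge[\mathring{R},\Omega+\widetilde\Omega,S+\widetilde S]$.
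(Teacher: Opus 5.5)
Your proof is correct. The paper gives no argument of its own for this lemma; it is quoted from \cite[Lemma 3.3]{NPW23}. Your route handles each part directly: rescaling the weights for (a); comparing the partial sums $W_k\le W'_k=\min(k\widetilde\Omega,S)$ against the front-loaded weights via summation by parts for (b); and adding the weight vectors for (c). This is a clean, rigorous form of the underlying principle that shifting weight onto smaller eigenvalues can only lower the sum. With $\widetilde\Omega=\Omega$, the same computation also gives the sharpest case of Lemma~\ref{Lem 3.4}.

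Two small points deserve a line each:
\begin{enumerate}
\item The identity $W'_N=S$ requires $S\le N\widetilde\Omega$. This holds because $S=\sum_\alpha\omega_\alpha\le N\Omega\le N\widetilde\Omega$.
\item Under your upper-bound reading, (b) and (c) are actually immediate, since the original (resp.\ summed) weights are already admissible. The Abel summation is genuinely needed only for the exact-maximum reading of the paper's Definition. In that reading, for (c) you should also note $\Omega+\widetilde\Omega\le S+\widetilde S$ (because $\Omega\le S$ and $\widetilde\Omega\le\widetilde S$) before invoking (b).
\end{enumerate}
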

	
	\section{Bochner formula for manifold with harmonic Weyl curvature}
	Let $M$ be an $n$-dimensional complete manifold with harmonic Weyl curvature, endowed with a standard Riemannian metric $\left\langle \text{,} \right\rangle$. Denote by $W$ its Weyl tensor and by $s$ its scalar curvature. For simplicity, we set
	\[R_{sjti}W_{sjkl}W_{tikl}=\alpha (W)\quad\text{and}\quad R_{sikt}W_{sjkl}W_{ijtl}=\beta(W).\]
	\begin{lemma}
		Let $M$ be an $n(\ge 4)$-dimensional complete Riemannian manifold with harmonic Weyl curvature. Then
		\begin{align}\label{laplacian}
			\left\langle \Delta W,W \right\rangle =2R_{lt}W_{ijkl}W_{ijkt}-\alpha (W)-4\beta (W).
		\end{align}
	\end{lemma}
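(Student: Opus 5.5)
The plan is to reduce the statement to the classical Weitzenböck computation for a tensor that is both divergence-free and satisfies the differential Bianchi identity. First I will record that harmonic Weyl curvature, $\delta W=0$, is equivalent for $n\ge 4$ to the vanishing of the Cotton tensor, i.e. the Schouten tensor $A=\frac{1}{n-2}\bigl(\Ric-\frac{s}{2(n-1)}g\bigr)$ is a Codazzi tensor. Next I write $R=W+A\owedge g$ (Kulkarni--Nomizu product, written out in indices since the paper has no macro for it) and take the cyclic sum of $\nabla_m R_{ijkl}$ over $(m,i,j)$. The left side vanishes by the second Bianchi identity. The $A\owedge g$ contribution is a linear combination of Cotton tensor components, so it also vanishes. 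Hence
\[
\nabla_m W_{ijkl}+\nabla_i W_{jmkl}+\nabla_j W_{mikl}=0, \qquad \nabla_m W_{mjkl}=0 .
\]

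With these two identities the Laplacian is computed directly. Using the cyclic identity,
\[
\Delta W_{ijkl}=\nabla_m\nabla_m W_{ijkl}=-\nabla_m\nabla_i W_{jmkl}-\nabla_m\nabla_j W_{mikl}.
\]
In each term I commute the two covariant derivatives with the Ricci identity. The commuted terms $\nabla_i\nabla_m W_{jmkl}$ and $\nabla_j\nabla_m W_{mikl}$ vanish by the divergence-free condition. What remains is a sum of four curvature-times-Weyl terms per summand, one for each index of $W$, schematically $R_{mijp}W_{pmkl}+R_{mimp}W_{jpkl}+R_{mikp}W_{jmpl}+R_{milp}W_{jmkp}$, plus the analogous terms with $i\leftrightarrow j$. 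By the antisymmetry of $W$ in $(i,j)$, the $j$-terms reproduce the $i$-terms. So after pairing with $W_{ijkl}$ it suffices to evaluate twice the contraction of the $i$-terms.

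Finally I contract with $W_{ijkl}$ and identify each piece.
\begin{enumerate}
\item The Ricci contraction $R_{mimp}$ gives a term $R_{ip}W_{jpkl}W_{ijkl}$. By the pair symmetry of $W$, this term together with its partner yields $2R_{lt}W_{ijkl}W_{ijkt}$.
\item The term $R_{mijp}W_{pmkl}W_{ijkl}$ is where the first Bianchi identity on $W_{ijkl}$ (in $i,j$ against the $R$ indices) enters. Antisymmetrizing shows it equals $-\tfrac12\alpha(W)$, so the doubled contribution is $-\alpha(W)$.
\item The two terms in which $R$ contracts with the $k$ and $l$ slots of $W$ are equal after swapping $(k,l)$ and using the pair symmetry. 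Each becomes $R_{sikt}W_{sjkl}W_{ijtl}$ up to sign, which gives the total $-4\beta(W)$.
\end{enumerate}

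The main obstacle is purely bookkeeping: keeping the sign conventions for $R_{ijkl}$ and the Ricci identity consistent with the definitions of $\alpha$ and $\beta$, so that the coefficients come out exactly as $2,-1,-4$. In particular, the Bianchi manipulation converting the first-slot term into $\alpha$ must produce the factor $\tfrac12$. I would check these signs against the round sphere, where $W=0$ is uninformative, or better against a product of spheres of different curvatures, if needed.
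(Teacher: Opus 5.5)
Your proposal is the paper's argument. The paper likewise uses the differential Bianchi identity for $W$ (it simply quotes this, whereas you derive it from the vanishing of the Cotton tensor), expands $\Delta W$ through the cyclic identity, commutes derivatives with the Ricci identity, discards the commuted terms by $\delta W=0$, and identifies the Ricci, $\alpha(W)$ and $\beta(W)$ contractions; the only difference is that it applies the cyclic identity to the last pair $(k,l)$ rather than the first pair $(i,j)$, which is equivalent by pair symmetry.

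One sign in your second item is wrong. Purely algebraically, $R_{mijp}W_{pmkl}W_{ijkl}=+\frac{1}{2}\alpha(W)$, just as $R_{mikp}W_{jmpl}W_{ijkl}=\beta(W)$ and $R_{ip}W_{jpkl}W_{ijkl}=-R_{lt}W_{ijkl}W_{ijkt}$. The coefficients $2,-1,-4$ come out because every one of these terms carries the common factor $-2$ produced by $\Delta W_{ijkl}=-\nabla_m\nabla_i W_{jmkl}-\nabla_m\nabla_j W_{mikl}$.
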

	\begin{remark}Although a similar formula has been proven in \cite{FH}, its form of expression differs. For the sake of completeness, we still present its proof here.
	\end{remark}
	\begin{proof}
		Since $W$ is harmonic, by the second Bianchi identity we obtain (see \cite{FH})
		\[W_{ijkl,h}+W_{ijlh,k}+W_{ijhk,l}=0.\]
		Combining with the above, the Ricci identity gives
		\begin{align*}
			&\left\langle \Delta W,W \right\rangle=W_{ijkl,ss}W_{ijkl}=W_{ijkl}(W_{ijsl,ks}+W_{ijks,ls})\\
			&=2W_{ijkl}(W_{ijks,sl}+R_{lsti}W_{tjks}+R_{lstj}W_{itks}+R_{lstk}W_{ijts}+R_{lsts}W_{ijkt})\\
			&=4R_{lsti}W_{ijkl}W_{tjks}+2R_{lstk}W_{ijkl}W_{ijts}+2R_{lt}W_{ijkl}W_{ijkt}\\
			&=2R_{lt}W_{ijkl}W_{ijkt}-\alpha (W)-4\beta (W).
		\end{align*}
This completes the proof of Lemma 3.1.
	\end{proof}
	Let 
	$$S^{ij}=\frac{1}{2}\sum\limits_{k,l}{(W_{iklj}+W_{ilkj})e_k\odot e_l}.$$ 
	Then $S^{ij}$ is a trace-less symmetric tensor since Weyl tensor is trace-free. Therefore, we can compute:
	\begin{align}\label{estimate1}
		\sum\limits_{i,j}&{<\mathring{R}(S^{ij}),S^{ij}>}=\sum\limits_{i,j}{<\bar{R}(S^{ij}),S^{ij}>}\nonumber\\
		=&\frac{1}{2}\sum\limits_{i,j,k,l,s,t}{(W_{iklj}+W_{ilkj})(W_{istj}+W_{itsj})(R_{kstl}+R_{ktsl})} \nonumber\\ 
		=&\frac{1}{2}(W_{iklj}W_{istj}R_{kstl}+W_{iklj}W_{istj}R_{ktsl}+W_{iklj}W_{itsj}R_{kstl}+W_{iklj}W_{itsj}R_{ktsl}\\
		&+W_{ilkj}W_{istj}R_{kstl}+W_{ilkj}W_{istj}R_{ktsl}+W_{ilkj}W_{itsj}R_{kstl}+W_{ilkj}W_{itsj}R_{ktsl}) \nonumber\\ 
		=&2W_{iklj}W_{istj}R_{kstl}+2W_{iklj}W_{istj}R_{ktsl} \nonumber\\ 
		=&\frac{1}{2}\alpha (W)-4\beta (W)\nonumber 
	\end{align}
	where the last equality follows from the first Bianchi identity, which gives
	\begin{align*}
		W_{iklj}W_{istj}R_{ktsl}=W_{iklj}W_{istj}(R_{klst}+R_{kstl})
		=W_{iklj}W_{istj}R_{klst}-\beta(W)
	\end{align*}
	and 
	\begin{align*}
		&\alpha(W)=W_{ijkl}W_{ijst}R_{klst}=(W_{ilkj}+W_{ikjl})(W_{itsj}+W_{isjt})R_{klst}\\
		&=W_{ilkj}W_{itsj}R_{klst}+W_{ilkj}W_{isjt}R_{klst}+W_{ikjl}W_{itsj}R_{klst}+W_{ikjl}W_{isjt}R_{klst}\\
		&=4W_{iklj}W_{istj}R_{klst}.
	\end{align*}
	
	In order to derive a Bochner formula for Riemannian manifolds with harmonic Weyl curvature, we require the following proposition, which is taken from \cite{DF24}.
	\begin{proposition}[\cite{DF24}]\label{pro1}
		Let $T$ and $\mathring{R}$ denote a smooth algebraic curvature tensor and a curvature operator of the second kind on a Riemannian manifold $(M, g)$ of dimension $n\ge 3$, respectively. Then
		\[<\mathring{R}(T^{S_{0}^{2}}),T^{S_{0}^{2}}>=\frac{2n+32}{n}R_{st}T_{sjkl}T_{tjkl}-5\alpha (T)+4\beta (T)-\frac{16}{n^2}s\left| T \right|^2.\]
	\end{proposition}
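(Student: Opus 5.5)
The plan is a direct expansion of the left-hand side in an orthonormal eigenbasis, followed by a sum over the basis using its completeness. By definition,
\[
\langle \mathring{R}(T^{S_0^2}),T^{S_0^2}\rangle=\sum_{\alpha,\beta}\langle S^\alpha T,S^\beta T\rangle\langle \mathring{R}S^\alpha,S^\beta\rangle .
\]
I will extend $\{S^\alpha\}$ to an orthonormal basis of $S^2(V)$ by adding $\frac{1}{\sqrt n}g$. I will then write $\sum_\alpha S^\alpha_{ab}S^\alpha_{cd}$ as the projector onto $S^2_0(V)$:
\[
\sum_\alpha S^\alpha_{ab}S^\alpha_{cd}=\tfrac12(\delta_{ac}\delta_{bd}+\delta_{ad}\delta_{bc})-\tfrac1n\delta_{ab}\delta_{cd}.
\]
The matrix coefficients of $\mathring{R}$ are $\langle \mathring{R}S,S'\rangle=\langle\bar R S,S'\rangle$ for traceless $S,S'$. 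In coordinates this is a contraction of the form $R_{kijl}S_{ij}S'_{kl}$, with a normalization constant I will fix from the definition of $\bar R$ and the basis $\frac{1}{\sqrt2}e_i\odot e_j$. This reduces the left-hand side to
\[
\sum R_{kijl}\,(S T)\cdots(S'T)
\]
summed against two copies of the projector, where $(ST)_{abcd}=S_{ap}T_{pbcd}+S_{bp}T_{apcd}+S_{cp}T_{abpd}+S_{dp}T_{abcp}$.

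Next, I will carry out the contractions term by term. Each $ST$ has four slots, so the double sum produces $16$ pairings of a slot of the first $T$ with a slot of the second. The pieces of the projector split each pairing into three kinds of term.

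The $\delta_{ac}\delta_{bd}$ piece gives contractions $R_{kijl}T_{i\cdots}T_{\cdots}$ in which the two $T$ indices attached to $R$ lie in the same slot position or in different slot positions. When they share a slot position, one gets $R_{kiil}$-type traces, i.e.\ Ricci terms $R_{st}T_{sjkl}T_{tjkl}$. When they lie in different slot positions, one gets contractions of the type $\alpha(T)$ or $\beta(T)$, using the antisymmetries of $T$ to normalize signs.

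The $\delta_{ad}\delta_{bc}$ piece gives the transposed contractions, which again reduce to Ricci, $\alpha$ and $\beta$ terms.

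The $-\frac1n\delta\delta$ pieces remove traces. Whenever $S$ is replaced by $g$ we have $gT=4T$, so these pieces contribute the correction terms: $-\frac1n\cdot$(Ricci contracted) twice gives the $\frac{32}{n}$ part of the Ricci coefficient, and the double trace gives $\frac{16}{n^2}s|T|^2$. Concretely, subtracting the $g$-component contributes $-\frac{1}{n}\langle \bar R g,\cdot\rangle$-type terms. Here $\bar R(g)=\pm\Ric$, so the cross terms involve $\Ric$ paired with $4T$ and the pure term involves $\langle\bar R g,g\rangle/n^2=\pm s/n^2$ times $16|T|^2$. Sign conventions must make this come out as $-\frac{16}{n^2}s|T|^2$.

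The final step is bookkeeping. Contractions where $R$ connects symmetric positions of the two $T$'s are converted using the first Bianchi identity for both $T$ (an algebraic curvature tensor) and $R$, exactly as in the computation of \eqref{estimate1}. That computation showed $\alpha(W)=4W_{iklj}W_{istj}R_{klst}$ and $W_{iklj}W_{istj}R_{ktsl}=W_{iklj}W_{istj}R_{klst}-\beta$. Those identities used only Bianchi and the pair symmetries, so they hold for general $T$. The result expresses everything as $a\,R_{st}T_{sjkl}T_{tjkl}+b\,\alpha(T)+c\,\beta(T)-\frac{16}{n^2}s|T|^2$, and summing the sixteen pairings should give $a=2$ from the untraced part, $b=-5$ and $c=4$.

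The main obstacle is this sign- and multiplicity-accurate bookkeeping of the $16\times 2$ contractions, in particular separating which cross-slot pairings give $\alpha$ and which give $\beta$. To control it, I would first verify the formula on $T=R$ for the round sphere. There $\mathring R=\mathrm{id}$ on $S_0^2$, so the left side equals $\sum_\alpha|S^\alpha R|^2$, which can be computed independently. This check calibrates the constants before the general count.
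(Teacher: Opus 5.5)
The paper does not prove this proposition; it is quoted from \cite{DF24}. So the question is whether the direct expansion you outline actually closes.

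Your set-up is correct, and with the paper's conventions nothing is left ambiguous. One has $\langle\bar R S,S'\rangle=R_{kijl}S_{ij}S'_{kl}$ with no extra constant. Also $\bar R(g)=-\Ric$, because the model $R_{ijkl}=\delta_{ik}\delta_{jl}-\delta_{il}\delta_{jk}$ makes $R_{ijij}$ the sectional curvature. Hence each mixed term of the projector contributes $-\frac1n\langle 4T,\bar R(g)T\rangle=\frac{16}{n}R_{st}T_{sjkl}T_{tjkl}$, and the double trace contributes $-\frac{16}{n^2}s|T|^2$. The proposition is thus reduced to the identity
\[
U:=\tfrac12\sum_{i,j,k,l}(R_{kijl}+R_{kjil})\,\langle E_{ij}T,E_{kl}T\rangle=2R_{st}T_{sjkl}T_{tjkl}-5\alpha(T)+4\beta(T).
\]
Here $(E_{ij}T)_{abcd}=\delta_{aj}T_{ibcd}+\delta_{bj}T_{aicd}+\delta_{cj}T_{abid}+\delta_{dj}T_{abci}$. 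The coefficient is $R_{kijl}$ symmetrized in $i\leftrightarrow j$, $k\leftrightarrow l$, simplified by pair symmetry.

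This identity is the entire content of the proposition, and your proposal does not establish it: $b=-5$ and $c=4$ are read off from the statement. The round-sphere test with $T=R$ cannot supply them either. Given $a=2$, in a fixed dimension it yields only the single relation $4b+(n-2)c=4n-28$ between the two unknowns. So as written there is a gap, though your method does close it once the count is done. The sixteen slot pairings fall into three classes.
\begin{enumerate}
\item The four diagonal pairings (slot $m$ with slot $m$) each equal $\delta_{jl}T_{ibcd}T_{kbcd}$. Since $R_{kijj}=0$ and $\sum_j R_{kjij}=R_{ki}$, together they contribute $2R_{st}T_{sjkl}T_{tjkl}$; this gives $a=2$.
\item The four pairings inside one antisymmetric pair of slots ($1$ with $2$, $2$ with $1$, $3$ with $4$, $4$ with $3$) each equal $\Psi_{il,jk}:=T_{ilcd}T_{jkcd}$. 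Since $\Psi$ is antisymmetric in $(i,l)$, the first Bianchi identity for $R$ gives $R_{kijl}\Psi_{il,jk}=-\frac12R_{iljk}\Psi_{il,jk}=-\frac12\alpha(T)$. Also $R_{kjil}=-R_{iljk}$ gives $R_{kjil}\Psi_{il,jk}=-\alpha(T)$. The contribution is $2(-\frac12-1)\alpha(T)=-3\alpha(T)$.
\item The eight pairings between different pairs of slots each equal, after using the symmetries of $T$, $\Theta_{il,jk}:=T_{ibld}T_{jbkd}$. Then $\beta(T)=R_{ijlk}\Theta_{il,jk}$. The Bianchi identity for $T$ in the form $T_{ibld}-T_{lbid}=T_{ilbd}$ gives $R_{iljk}\Theta_{il,jk}=\frac14\alpha(T)$. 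Bianchi for $R$, written as $R_{kijl}=R_{ijlk}+R_{lijk}$, then yields $R_{kijl}\Theta_{il,jk}=\beta(T)-\frac14\alpha(T)$. Finally $R_{kjil}=-R_{iljk}$ yields $R_{kjil}\Theta_{il,jk}=-\frac14\alpha(T)$. The contribution is $4(\beta-\frac12\alpha)=4\beta(T)-2\alpha(T)$.
\end{enumerate}
Adding the three classes gives the displayed identity, and with the trace terms the proposition follows.

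As a consistency check, take trace-free $T$ on the unit sphere, where $\alpha=2|T|^2$ and $\beta=-\frac12|T|^2$. The resulting formula gives $\sum_\alpha|S^\alpha T|^2=\frac{2(n^2+n-8)}{n}|T|^2$, which is the value the paper quotes in Section 3.
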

	
	With the aforementioned preparations, we arrive at the following Bochner formula.
	\begin{proposition}\label{pro2}
		Let $M$ be an $n(\ge 4)$-dimensional complete Riemannian manifold with harmonic Weyl curvature. Then
		\[3\left\langle \Delta W,W \right\rangle=\left\langle \mathring{R}\,(W^{S_{0}^2}),W^{S_{0}^{2}} \right\rangle+\frac{4(n-8)}{n}R_{lt}W_{ijkl}W_{ijkt}+4\sum\limits_{i,j}{<\mathring{R}(S^{ij}),S^{ij}>}+\frac{16}{n^2}s|W|^2.\]
	\end{proposition}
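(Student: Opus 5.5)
This identity is purely algebraic once the three ingredients are in hand: Lemma 3.1 (formula \eqref{laplacian}) for $\langle \Delta W,W\rangle$, the computation \eqref{estimate1} for $\sum_{i,j}\langle \mathring{R}(S^{ij}),S^{ij}\rangle$, and Proposition~\ref{pro1} applied with $T=W$. The Weyl tensor is an algebraic curvature tensor, so Proposition~\ref{pro1} applies. My plan is to write each of the three quantities in terms of the four invariants
\[
A:=R_{lt}W_{ijkl}W_{ijkt},\qquad \alpha(W),\qquad \beta(W),\qquad s|W|^2 .
\]
I then check that both sides of the claimed identity have the same coefficients.

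Concretely, Lemma 3.1 gives $3\langle\Delta W,W\rangle = 6A-3\alpha(W)-12\beta(W)$. Proposition~\ref{pro1} with $T=W$ gives
\[
\langle \mathring{R}(W^{S_0^2}),W^{S_0^2}\rangle=\frac{2n+32}{n}A-5\alpha(W)+4\beta(W)-\frac{16}{n^2}s|W|^2 .
\]
Note that $R_{st}W_{sjkl}W_{tjkl}$ equals $A$ by the symmetries of $W$: relabel indices and use $W_{ijkl}=W_{klij}$. From \eqref{estimate1}, $4\sum\langle \mathring{R}(S^{ij}),S^{ij}\rangle = 2\alpha(W)-16\beta(W)$.

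Adding these to $\frac{4(n-8)}{n}A+\frac{16}{n^2}s|W|^2$, the $s|W|^2$ terms cancel. The $A$-coefficient becomes $\frac{2n+32+4n-32}{n}=6$. The $\alpha$-coefficient is $-5+2=-3$, and the $\beta$-coefficient is $4-16=-12$. This is exactly $6A-3\alpha(W)-12\beta(W)=3\langle\Delta W,W\rangle$, as required.

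The only real obstacle is bookkeeping of index conventions. I must make sure that the $\alpha$ and $\beta$ appearing in Proposition~\ref{pro1} (from \cite{DF24}) are defined by the same contractions $R_{sjti}T_{sjkl}T_{tikl}$ and $R_{sikt}T_{sjkl}T_{ijtl}$ as here. I must also check that the Ricci contraction there matches $A$ under the sign convention used in Lemma 3.1. I would verify this by checking that for $T=W$ the contraction in Proposition~\ref{pro1} reduces to $R_{lt}W_{ijkl}W_{ijkt}$ via pair symmetry. I would also confirm that the $\frac{16}{n^2}s|T|^2$ term carries the same sign convention, since its cancellation is what the extra $\frac{16}{n^2}s|W|^2$ in the statement is designed for.
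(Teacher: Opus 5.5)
Your proposal is correct and follows the paper's own route. The paper likewise just combines \eqref{laplacian}, the identity \eqref{estimate1} (so $4\sum_{i,j}\langle\mathring{R}(S^{ij}),S^{ij}\rangle=2\alpha(W)-16\beta(W)$), and Proposition~\ref{pro1} with $T=W$, and your coefficient check is exactly the bookkeeping the paper leaves implicit: $6$ for $R_{lt}W_{ijkl}W_{ijkt}$, $-3$ for $\alpha(W)$, $-12$ for $\beta(W)$, with the $s|W|^2$ terms cancelling.
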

	\begin{proof}
		From Proposition~\ref{pro1},  we have
		\begin{eqnarray*}
			& <\mathring{R}(W^{S_{0}^2}),W^{S_{0}^2}>=\frac{2n+32}{n}{R_{st}}{W_{sjkl}}{W_{tjkl}}-5\alpha (W)+4\beta (W)-\frac{16}{n^2}s{\left| W \right|}^2.
		\end{eqnarray*}
		Combining this with (\ref{laplacian}) and (\ref{estimate1}), we obtain
		\begin{align*}
			3\left\langle \Delta W,W \right\rangle &=\left\langle \mathring{R}\,(W^{S_{0}^2}),W^{S_{0}^{2}} \right\rangle +\frac{4(n-8)}{n}R_{lt}W_{ijkl}W_{ijkt}+2\alpha (W)-16\beta(W)+\frac{16}{n^2}s|W|^2 \\ 
			& =\left\langle \mathring{R}\,(W^{S_{0}^2}),W^{S_{0}^{2}} \right\rangle +\frac{4(n-8)}{n}R_{lt}W_{ijkl}W_{ijkt}+4\sum\limits_{i,j}{<\mathring{R}(S^{ij}),S^{ij}>}+\frac{16}{n^2}s|W|^2.
		\end{align*}
		This completes the proof of Proposition~\ref{pro2}.
	\end{proof}
	
	Let $\{S^{\alpha}\}$ is an orthonormal eigenbasis for $\mathring{R}$ with corresponding eigenvalues $\{\lambda_{\alpha}\}$. Since $S^{ij}$ is a traceless symmetric $(0,2)$-tensor, we have
	$$\sum_{i,j}\left\langle\mathring{R}(S^{ij}),S^{ij}\right\rangle=\sum_{\alpha}\left[\lambda_{\alpha}\sum_{i,j}{(S_{\alpha}^{ij})^2}\right],$$
	where $S_{\alpha}^{ij}$ is the component of $S^{ij}$ under the basis $\{S^{\alpha}\}$. Hence Proposition~\ref{pro2} can be written as 
	\begin{equation}\label{Bochner}
		3\left\langle \Delta W,W \right\rangle=  \sum_{\alpha}{\lambda_{\alpha}\left|S^{\alpha}W\right|^2}+\frac{4(n-8)}{n}R_{lt}W_{ijkl}W_{ijkt}+4\sum_{\alpha}\left[\lambda_{\alpha}\sum_{i,j}{(S_{\alpha}^{ij})^2}\right]+\frac{16}{n^2}s|W|^2.
	\end{equation}
	
	Based on the calculations presented in \cite{DF24}, it is shown that
	\begin{eqnarray*}
		\sum\limits_{\alpha=1}^{N}{\left|S^{\alpha}W \right|^2}=\frac{2(n^2+n-8)}{n}{\left| W \right|^2},
	\end{eqnarray*}
	and $$\left| S^{\alpha}W \right|^2 \le\frac{8(n-2)}{n}\left| W \right|^2.$$ 
	Therefore,
	\begin{equation}\label{estimate}
		\sum\limits_{\alpha}{{\lambda }_{\alpha}\left| S^{\alpha}W \right|^2}\ge [\mathring{R},\frac{8(n-2)}{n},\frac{2(n^2+n-8)}{n}]|W|^2.
	\end{equation}
	
	According to \cite[Example 3.2]{NPW23}, the scalar curvature $s$ of $M$ satisfies
	\begin{align}
		s\ge \frac{2n}{n+2}[\mathring{R},1,\frac{(n-1)(n+2)}{2}],\label{scalar}
	\end{align}
	and from \cite[Lemma 3.14]{NPW23}, the Ricci curvature $Ric$ of $M$ satisfies
	\begin{align*}
		\Ric \geq \frac{n-1}{n+1} \left[ \mathring{R}, 1, n \right] + \frac{1}{n(n+1)}s.
	\end{align*}
	Combining (\ref{scalar}) with Lemma~\ref{ComputationsWithWeights}(c), we obtain
	\begin{align}
		\Ric &\geq \frac{n-1}{n+1} \left[ \mathring{R}, 1, n \right] + \frac{2}{(n+1)(n+2)}[\mathring{R},1,\frac{(n-1)(n+2)}{2}]\nonumber\\
		&\ge \left[ \mathring{R}, \frac{n}{n+2}, n-1 \right].\label{estimate2}
	\end{align}	
	Furthermore, applying (\ref{estimate1}) to the curvature tensor $R_{ijkl}=\delta_{ik}\delta_{jl}-\delta_{il}\delta_{jk}$ gives
	$$\sum_{\alpha}{\sum_{i,j}{(S_{\alpha}^{ij})^2}}=3|W|^2.$$
	From the weight principle, it follows that
	\begin{align}
		\sum_{\alpha}\left[\lambda_{\alpha}\sum_{i,j}{(S_{\alpha}^{ij})^2}\right] \ge 3\left[\mathring{R},1,1 \right]\left| W \right|^2.\label{estimate3}
	\end{align}	
	
	\begin{proposition}\label{2.3}
		Let $M$ be an $n(\ge 8)$-dimensional complete Riemannian manifold with harmonic Weyl curvature. Then
		$$3\left\langle \Delta W,W \right\rangle \ge [\mathring{R},\frac{8(3n-1)}{n+2},6(n-1)]\left| W \right|^2.$$
	\end{proposition}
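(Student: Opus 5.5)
Start from the Bochner formula (\ref{Bochner}) and bound each of its four terms below by a weighted sum of the eigenvalues of $\mathring{R}$, always with the factor $|W|^2$. Then add them using Lemma~\ref{ComputationsWithWeights}(c). The assumption $n\ge 8$ enters only through the sign of the coefficient $\frac{4(n-8)}{n}$ in front of the Ricci term, which is then nonnegative.

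The first term is handled by (\ref{estimate}), giving $[\mathring{R},\frac{8(n-2)}{n},\frac{2(n^2+n-8)}{n}]|W|^2$. The third term is handled by (\ref{estimate3}), giving $12[\mathring{R},1,1]|W|^2=[\mathring{R},12,12]|W|^2$. For the Ricci term, write $R_{lt}W_{ijkl}W_{ijkt}=\sum_{l,t}R_{lt}A_{lt}$ with $A_{lt}=W_{ijkl}W_{ijkt}$. Since $A$ is positive semidefinite with trace $|W|^2$, diagonalize $A$ simultaneously with nothing else and get $R_{lt}A_{lt}=\sum_m \mu_m \Ric(v_m,v_m)$ with $\mu_m\ge0$ and $\sum\mu_m=|W|^2$. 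Applying (\ref{estimate2}) to each $\Ric(v_m,v_m)$ and summing gives $R_{lt}W_{ijkl}W_{ijkt}\ge[\mathring{R},\frac{n}{n+2},n-1]|W|^2$. Multiplying by $\frac{4(n-8)}{n}\ge0$ via Lemma~\ref{ComputationsWithWeights}(a) yields $[\mathring{R},\frac{4(n-8)}{n+2},\frac{4(n-8)(n-1)}{n}]|W|^2$; when $n=8$ the term simply vanishes. For the scalar term, (\ref{scalar}) gives $\frac{16}{n^2}s|W|^2\ge[\mathring{R},\frac{32}{n(n+2)},\frac{16(n-1)}{n}]|W|^2$.

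Adding the four bounds, the total weight is
\[
\frac{2(n^2+n-8)}{n}+12+\frac{4(n-8)(n-1)}{n}+\frac{16(n-1)}{n}=\frac{6n^2-6n}{n}=6(n-1),
\]
which is exactly the claimed total weight. The highest weight is
\[
\frac{8(n-2)}{n}+12+\frac{4(n-8)}{n+2}+\frac{32}{n(n+2)}=\frac{8(n-2)(n+2)+12n(n+2)+4n(n-8)+32}{n(n+2)}=\frac{24n^2-8n}{n(n+2)}=\frac{8(3n-1)}{n+2}.
\]
This matches the statement, so Lemma~\ref{ComputationsWithWeights}(c) finishes the proof.

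The main point to check is the Ricci step: (\ref{estimate2}) is a pointwise lower bound for $\Ric(v,v)$ with unit $v$, and it must be turned into a bound on the contraction with $A$. The key is that the weighted sums for different $v_m$ combine with nonnegative coefficients $\mu_m/|W|^2$ summing to one. By Lemma~\ref{ComputationsWithWeights}(a),(c), such a convex combination is again a weighted sum with highest weight at most $\frac{n}{n+2}$ and total weight $n-1$. Part (b) of the lemma then lets us replace "at most" by the exact value $\frac{n}{n+2}$, and it also lets us carry the resulting highest weights through the addition.
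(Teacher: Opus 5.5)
Your proposal is correct and follows the paper's proof exactly. You bound the four terms of (\ref{Bochner}) using (\ref{estimate}), (\ref{estimate2}), (\ref{estimate3}) and (\ref{scalar}), use $n\ge 8$ only for the sign of $\frac{4(n-8)}{n}$, and add the weights with Lemma~\ref{ComputationsWithWeights} to reach the same highest weight $\frac{8(3n-1)}{n+2}$ and total weight $6(n-1)$. Diagonalizing the positive semidefinite matrix $A_{lt}=W_{ijkl}W_{ijkt}$ (trace $|W|^2$) turns the pointwise bound on $\Ric(v,v)$ into a bound on $R_{lt}W_{ijkl}W_{ijkt}$; the paper uses this step tacitly, and you justify it correctly.
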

	\begin{proof}
		Combining Proposition~\ref{pro2} with (\ref{scalar}),(\ref{estimate2}),(\ref{estimate3}) and Lemma~\ref{ComputationsWithWeights} yields 
		\begin{align*}
			3&\left\langle \Delta W,W \right\rangle =\left\langle \mathring{R}\,(W^{S_{0}^2}),W^{S_{0}^{2}} \right\rangle +\frac{4(n-8)}{n}R_{lt}W_{ijkl}W_{ijkt}+4\sum\limits_{i,j}{<\mathring{R}(S^{ij}),S^{ij}>}+\frac{16}{n^2}s|W|^2 \\ 
			\ge& [\mathring{R},\frac{8(n-2)}{n},\frac{2(n^2+n-8)}{n}]\left| W \right|^2+\frac{4(n-8)}{n}[\mathring{R},\frac{n}{n+2},n-1]\left| W \right|^2+12[R,1,1]\left| W \right|^2\\
			&+\frac{32}{n(n+2)}[R,1,\frac{(n-1)(n+2)}{2}]\left| W \right|^2 \\ 
			\ge&[\mathring{R},\frac{8(n-2)}{n}+\frac{4(n-8)}{n+2}+12+\frac{32}{n(n+2)},\frac{2(n^2+n-8)}{n}+\frac{4(n-8)(n-1)}{n}+12+\frac{16(n-1)}{n}]\left| W \right|^2 \\ 
			=&[\mathring{R},\frac{8(3n-1)}{n+2},6(n-1)]\left| W \right|^2 .
		\end{align*}
	\end{proof}
	
	\begin{proof}[\textbf{Proof of Theorem~\ref{A}}] 
		
		If $\mathring{R}$ is $\frac{3(n-1)(n+2)}{4(3n-1)}$-nonnegative, then by using Proposition~\ref{2.3} and  Lemma~\ref{Lem 3.4},	  we have
		\[\frac{1}{2}\Delta \left| W \right|^2=\left| \nabla W \right|^2+\left\langle \Delta W,W \right\rangle\ge 0. \]
		By the maximum principle, from the above we have $\nabla W=0.$ 
		According to Derdzi\'{n}ski and Roter’s result \cite{DR} that Riemannian manifolds with $\nabla W=0$ are either locally conformally flat or locally symmetric, we see that $(M, g)$ is either locally conformally flat or locally symmetric.
		
		By Theorem 1.8 in \cite{Li24}, $(M,g)$ is either locally irreducible or flat.
		
		If $(M,g)$ is locally conformally flat, then by the classification theorem in \cite{CH,Z} and the fact that $(M,g)$ is locally irreducible, it follows that $(M,g)$ is globally conformally equivalent to a space of positive constant curvature or is isometric to a flat manifold.
		
		If $(M,g)$ is locally symmetric, then it has parallel Ricci curvature. Since $(M,g)$ is locally irreducible, it is an Einstein manifold. Results from  \cite{DF24,FLD} imply that $M$ is necessarily of constant curvature. Therefore, $(M,g)$ is a space of constant curvature.
	\end{proof}
	
	\section{The case for $n=5$ and $n=7$}
	In this section, we prove Theorem~\ref{B}. First, we establish an upper bound for the Ricci curvature in terms of the scalar curvature and the curvature operator of the second kind.
	\begin{proposition}\label{3.1}
		Let $R$ be an algebraic curvature tensor on a Euclidean vector space $V$
		of dimension $n \ge 3$. If $R$ has $\frac{(n-1)(n-2)}{2}$-nonnegative curvature operator of the second kind, then the Ricci curvature of $R$ satisfies $Ric\le \frac{s}{2}.$
	\end{proposition}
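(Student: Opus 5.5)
Fix a unit vector $e_1\in V$ and complete it to an orthonormal basis $\{e_1,\dots,e_n\}$. Throughout, $K_{ij}=R_{ijij}$ denotes the sectional curvature of the plane $e_i\wedge e_j$; this is the sign convention under which $R_{ijkl}=\delta_{ik}\delta_{jl}-\delta_{il}\delta_{jk}$ has $K_{ij}=1$. Since $s=2\sum_{i<j}K_{ij}$ and $\Ric(e_1,e_1)=\sum_{j\ge 2}K_{1j}$, we have
\[
\frac{s}{2}-\Ric(e_1,e_1)=\sum_{2\le i<j\le n}K_{ij}.
\]
The right-hand side is a sum of exactly $\frac{(n-1)(n-2)}{2}$ sectional curvatures. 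The plan is to realize each $K_{ij}$ as a Rayleigh quotient of $\mathring{R}$ on a family of mutually orthonormal traceless symmetric tensors, and then invoke the $\frac{(n-1)(n-2)}{2}$-nonnegativity.

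For $i\neq j$ set $\varphi_{ij}=\frac{1}{\sqrt2}e_i\odot e_j$. These tensors are traceless, and by the orthonormal basis of $S^2(V)$ recorded in Section~2.1 they are mutually orthonormal. Hence $\langle \mathring{R}(\varphi_{ij}),\varphi_{ij}\rangle=\langle \bar R(\varphi_{ij}),\varphi_{ij}\rangle$. Using $\bar R(e_i\odot e_j)=\sum_{k,l}R_{kijl}e_k\odot e_l$ and $\langle e_k\odot e_l,e_i\odot e_j\rangle=2(\delta_{ki}\delta_{lj}+\delta_{kj}\delta_{li})$, a short computation gives
\[
\langle \bar R(e_i\odot e_j),e_i\odot e_j\rangle=2(R_{iijj}+R_{jiji})=2K_{ij},
\]
because $R_{iijj}=0$. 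Therefore $\langle \mathring{R}(\varphi_{ij}),\varphi_{ij}\rangle=K_{ij}$. I would sanity-check this normalization on the round curvature tensor $\delta_{ik}\delta_{jl}-\delta_{il}\delta_{jk}$ to confirm both the sign and the factor.

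Let $\{\varphi_{ij}\}_{2\le i<j\le n}$, which has $m=\frac{(n-1)(n-2)}{2}$ members, be the orthonormal family in $S_0^2(V)$. Then
\[
\frac{s}{2}-\Ric(e_1,e_1)=\sum_{2\le i<j\le n}\langle\mathring{R}(\varphi_{ij}),\varphi_{ij}\rangle\ge\lambda_1+\cdots+\lambda_m .
\]
The inequality is the Ky Fan minimum principle: the trace of the symmetric operator $\mathring{R}$ over any $m$-dimensional subspace is at least the sum of its $m$ smallest eigenvalues. In weighted-sum language this is $[\mathring{R},1,m]$, so Lemma~\ref{Lem 3.4} gives the same conclusion. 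By hypothesis the right-hand side is nonnegative. Since $e_1$ was an arbitrary unit vector, $\Ric\le \frac{s}{2}$.

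The only delicate point is the normalization and sign in the identity $K_{ij}=\langle\mathring{R}(\varphi_{ij}),\varphi_{ij}\rangle$ under the paper's conventions for $\bar R$ and for $\odot$. Everything else is the standard variational characterization of the partial eigenvalue sums.
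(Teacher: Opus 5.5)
Your proposal is correct and follows essentially the same route as the paper. The paper uses the same orthonormal family $\frac{1}{\sqrt2}e_k\odot e_l$, $2\le k<l\le n$, and the same identity $\langle\mathring{R}(\psi_{kl}),\psi_{kl}\rangle=R_{klkl}$, summed to $\frac{s}{2}-R_{11}$; you additionally verify the normalization and make explicit the Ky Fan step, namely that a partial trace over an orthonormal $m$-frame is at least $\lambda_1+\cdots+\lambda_m$, which the paper uses without comment.
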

	\begin{remark}This proposition follows from the proof of \cite[Theorem 1.6]{Li22}.
	\end{remark}
	\begin{proof}
		Let $\{e_1, . . . ,e_n\}$ be an orthonormal basis of $V$ and define:
		$${\psi}_{kl}=\frac{1}{\sqrt{2}}e_{k}\odot e_l,\quad\text{for}\quad 2\le k<l\le n.$$
		Then $\{{\psi}_{kl}\}$ is an orthonormal subset of $S_0^2(V)$. Since $R$ has $\frac{(n-1)(n-2)}{2}$-nonnegative curvature operator of the second kind, we can calculate that 
		$$0\le \sum\limits_{2\le k<l\le n}{\left\langle \mathring{R}({\psi}_{kl}),{\psi}_{kl} \right\rangle} =\sum\limits_{2\le k<l\le n}{R_{klkl}}=\frac{s}{2}-R_{11},$$
		which implies that $Ric \le \frac{s}{2}$.
	\end{proof}
	\begin{proof}[\textbf{Proof of Theorem~\ref{B}}]
		For $n=5$,   we have 
		\begin{align*}
			\alpha (W)=&{{R}_{sjti}}{{W}_{sjkl}}{{W}_{tikl}}\\
			=&{{W}_{sjti}}{{W}_{sjkl}}{{W}_{tikl}}+\frac{1}{n-2}({{R}_{st}}{{\delta }_{ji}}+{{R}_{ji}}{{\delta }_{st}}-{{R}_{si}}{{\delta }_{jt}}-{{R}_{jt}}{{\delta }_{si}}){{W}_{sjkl}}{{W}_{tikl}} \\ 
			&-\frac{s}{(n-1)(n-2)}({{\delta }_{st}}{{\delta }_{ji}}-{{\delta }_{si}}{{\delta }_{jt}}){{W}_{sjkl}}{{W}_{tikl}} \\ 
			=&{{W}_{sjti}}{{W}_{sjkl}}{{W}_{tikl}}+\frac{4}{n-2}{{R}_{st}}{{W}_{sjkl}}{{W}_{tjkl}}-\frac{2s}{(n-1)(n-2)}{{\left| W \right|}^{2}}
		\end{align*}
		and
		\begin{align*}
			\beta (W)=&{{R}_{sikt}}{{W}_{sjkl}}{{W}_{ijtl}}\\
			=&{{W}_{sikt}}{{W}_{sjkl}}{{W}_{ijtl}}+\frac{1}{n-2}({{R}_{sk}}{{\delta }_{it}}+{{R}_{it}}{{\delta }_{sk}}-{{R}_{st}}{{\delta }_{ik}}-{{R}_{ik}}{{\delta }_{st}}){{W}_{sjkl}}{{W}_{ijtl}} \\ 
			&-\frac{s}{(n-1)(n-2)}({{\delta }_{st}}{{\delta }_{ji}}-{{\delta }_{si}}{{\delta }_{jt}}){{W}_{sjkl}}{{W}_{ijtl}} \\ 
			=&{{W}_{sikt}}{{W}_{sjkl}}{{W}_{ijtl}}-\frac{2}{n-2}{{R}_{st}}{{W}_{sjkl}}{{W}_{kjtl}}+\frac{s}{(n-1)(n-2)}{{W}_{sjkl}}{{W}_{kjsl}} \\ 
            =&{{W}_{sikt}}{{W}_{sjkl}}{{W}_{ijtl}}-\frac{2}{n-2}{{R}_{st}}{{W}_{sjkl}}(-{{W}_{jtkl}}-{{W}_{tkjl}})+\frac{s}{(n-1)(n-2)}{{W}_{sjkl}}(-{{W}_{jskl}}-{{W}_{skjl}}) \\ 
			=&{{W}_{sikt}}{{W}_{sjkl}}{{W}_{ijtl}}-\frac{1}{n-2}{{R}_{st}}{{W}_{sjkl}}{{W}_{tjkl}}+\frac{s}{2(n-1)(n-2)}{{\left| W \right|}^{2}}. \\ 
		\end{align*}
		Due to Jack and Parker’s result \cite{JP}, when $n\le 5$
		\[{{W}_{sjti}}{{W}_{sjkl}}{{W}_{tikl}}=2{{W}_{sikt}}{{W}_{sjkl}}{{W}_{ijtl}},\]
		hence
		$$\alpha (W)=2\beta (W)+\frac{6}{n-2}R_{st}W_{sjkl}W_{tjkl}-\frac{3}{(n-1)(n-2)}s\left| W \right|^2.$$
		Combining above with (\ref{laplacian}), we have 
		\begin{equation}\label{5Bochner}
			\left\langle \Delta W,W \right\rangle =-6\beta (W)-\frac{2(n-5)}{n-2}{{R}_{st}}{{W}_{sjkl}}{{W}_{tjkl}}+\frac{3}{(n-1)(n-2)}s\left| W \right|^2.
		\end{equation}
		Consequently, it follows from (\ref{estimate1}) and Proposition~\ref{pro1} that 
		\begin{equation}\label{5Bochner2}
			\sum\limits_{i,j}{\left\langle R(S^{ij}),S^{ij} \right\rangle}=-3\beta (W)+\frac{3}{n-2}R_{st}W_{sjkl}W_{tjkl}-\frac{3}{2(n-1)(n-2)}s\left| W \right|^2,
		\end{equation}
		and
		\begin{equation}\label{5Bochner3}
			\left\langle R(W^{S_{0}^{2}}),W^{S_{0}^{2}} \right\rangle =-6\beta (W)+\frac{2(n^2-n-32)}{n(n-2)}R_{st}W_{sjkl}W_{tjkl}-\frac{n^2-48n+32}{n^2(n-1)(n-2)}s\left| W \right|^2.
		\end{equation}
		Substituting (\ref{5Bochner2}) and (\ref{5Bochner3}) into (\ref{5Bochner}), we get for $n=5$
		\[\left\langle \Delta W,W \right\rangle =\frac{5}{9}\left\langle R(W^{S_{0}^{2}}),W^{S_{0}^{2}} \right\rangle +\frac{8}{9}\sum\limits_{i,j}{\left\langle R(S^{ij}),S^{ij} \right\rangle}+\frac{1}{45}s\left| W \right|^2.
		\]
		
		For $n=7$, If $\mathring{R}$ is $\frac{54}{35}$-nonnegative, form Proposition~\ref{3.1} we have $\Ric\le \frac{s}{2}.$ This together with (\ref{Bochner}) yields
		\[3\left\langle \Delta W,W \right\rangle\ge  \sum_{\alpha}{\lambda_{\alpha}\left|S^{\alpha}W\right|^2}+4\sum_{\alpha}\left[\lambda_{\alpha}\sum_{i,j}{(S_{\alpha}^{ij})^2}\right]+\frac{2}{49}s|W|^2.\]
		The remainder of the argument is analogous to the proof of Theorem~\ref{A}, which completes the proof of Theorem~\ref{B}.
	\end{proof}
	
	\section{Rigidity theorems for dimensional four}
	\begin{proof}[\textbf{Proof of Theorem~\ref{C}}]
		The Weitzenböck formula (cf. 16.73 in \cite{Einstein}) on $4$-dimensional manifolds with harmonic Weyl curvature is given by
		\[\frac12\Delta \left| W^+ \right|^2=\left| \nabla W^+ \right|^2+\frac{s}{2}\left| W^+ \right|^2-72\det W^+.\]
		
		Denote the eigenvalues of $W^+$ by $a_{1} \le a_{2} \le a_{3}$, and those of $W^-$ by $b_{1} \le b_{2} \le b_{3}$.
		
		According to \cite{CGT23}, on 4-manifold, there is an
		orthonormal basis of $S_0^2(TM^4)$ such that all diagonal elements of the matrix of $\mathring{R}$ are given by
		\[{\lambda}_{ij}=\frac{s}{12}-a_i-b_j,\quad i,j=1,2,3.\]
		If  $\mathring{R}$ satisfies (\ref{cone}), then
		\[a_3=\frac{s}{12}-\frac{\lambda_{31}+\lambda_{32}+\lambda_{33}}{3}\le \frac{s}{12}-\frac{\lambda_{1}+\lambda_{2}+\lambda_{3}}{3}\le\frac{s}{12}+\bar{\lambda}=\frac{s}{6},\]
		and \[b_3=\frac{s}{12}-\frac{\lambda_{13}+\lambda_{23}+\lambda_{33}}{3}\le \frac{s}{12}-\frac{\lambda_{1}+\lambda_{2}+\lambda_{3}}{3}\le \frac{s}{12}+\bar{\lambda}=\frac{s}{6}.\]
		Consequently, $s\ge0$;  moreover,  if $s=0$, then $W^\pm=0$, i.e., $M$ is conformal flat. 
		
		Let
		$$f(a_1,a_2,a_3)=\frac{s}{2}\left| W^+ \right|^2-72\det W^+=2\left[s(a_{1}^2+a_2^2+a_{3}^2)-36a_1a_2a_3\right].$$ 
		Since $a_1+a_2+a_3=0$, we obtain
		\begin{align*}
			f=2\left[s(a_{1}^2+a_{2}^2+a_{3}^2)-12(a_{1}^3+a_{2}^3+a_{3}^3)\right].  
		\end{align*}
		
		Now we consider the case $s>0$. The expression for $f$ can be rewritten as
		\begin{align*}
			& f=2\left[s(a_{1}^2+a_{2}^2+a_{3}^2)-12(a_{1}^3+a_{2}^3+a_{3}^3)\right] \\ 
			& =2\left[s\sum\limits_{i=1}^{3}{(\frac{s}{6}-a_i-\frac{s}{6})^2}+12\sum\limits_{i=1}^{3}{(\frac{s}{6}-a_i-\frac{s}{6})^3}\right]  \\ 
			& =2\left[12\sum\limits_{i=1}^{3}{(\frac{s}{6}-a_i)^3}-5s\sum\limits_{i=1}^{3}{(\frac{s}{6}-a_i)^2}+\frac{s^3}{4}\right]  \\ 
			& =\frac{125}{72}s^3\left[ \sum\limits_{i=1}^{3}{[\frac{12}{5s}(\frac{s}{6}-a_i)]^3}-\sum\limits_{i=1}^{3}{[\frac{12}{5s}(\frac{s}{6}-a_i)]^2} \right]+\frac{s^3}{2}.
		\end{align*}
		Since 
		\[\frac{12}{5s}(\frac{s}{6}-a_i)\ge 0\quad\text{and}\quad \sum\limits_{i}{\frac{12}{5s}(\frac{s}{6}-a_i)}=\frac{6}{5},\]
		it follows from \cite[Lemma 3.3]{FL} that the minimal points of $f$ are 
		$$(\frac{12}{5s}(\frac{s}{6}-a_1),\frac{12}{5s}(\frac{s}{6}-a_2),\frac{12}{5s}(\frac{s}{6}-a_3))=(\frac{2}{5},\frac{2}{5},\frac{2}{5})$$
		and  
		$$(\frac{12}{5s}(\frac{s}{6}-a_1),\frac{12}{5s}(\frac{s}{6}-a_2),\frac{12}{5s}(\frac{s}{6}-a_3))=(\frac{3}{5},\frac{3}{5},0),$$
		which correspond to $$(a_1,a_2,a_3)=(0,0,0) \quad\text{and}\quad (a_1,a_2,a_3)=(-\frac{s}{12},-\frac{s}{12},\frac{s}{6}),$$
		respectively.
		Consequently, we have
		$$f\ge f(0,0,0)=f(-\frac{s}{12},-\frac{s}{12},\frac{s}{6})=0,$$
		which in this case implies
		\[\frac12\Delta \left| W^+ \right|^2=\left| \nabla W^+ \right|^2+\frac{s}{2}\left| W^+ \right|^2-72\det W^+\geq0.\]
		By the maximum principle, we obtain $ \nabla W^+=0$, and either $(a_1,a_2,a_3)=(0,0,0),$ or $(a_1,a_2,a_3)=(-\frac{s}{12},-\frac{s}{12},\frac{s}{6}).$ 
		
		Similarly, we obtain $\nabla W^-=0$, with $(b_1,b_2,b_3)$ being either $(0,0,0)$ or $(-\frac{s}{12},-\frac{s}{12},\frac{s}{6}).$ 
		It follows that either $W=0$, or $W\neq0$ and $ \nabla W=0$--the latter case implying that $M$ is a Kähler manifold of constant scalar curvature \cite{D}. Together with the harmonic Weyl curvature condition, this forces the Ricci tensor to be parallel. Consequently, we have $ \nabla R=0$; that is, $M$ is locally symmetric. So when the universal cover $\tilde{M}$ of $M$ is irreducible, $\tilde{M}$ is  $\mathbb{CP}^2$ with the Fubini–Study metric by the classification of $4$-dimensional symmetric spaces; when the universal cover $\tilde{M}$ of $M$ is reducible, then $\Sigma_1^2\times \Sigma_2^2$ with the product metric, where $k_i(i=1,2)$ is constant Gaussian curvature of $\Sigma_i^2$  and $k_1+k_2=\frac{s}{2}>0$.
	\end{proof}
	\begin{corollary}\label{D}
		Let $(M,g)$ be a $4$-dimensional compact manifold with harmonic Weyl curvature and $\chi(M)\geq0$ whose the curvature operator of the second kind satisfies 
		\begin{equation*}
			\lambda_{1}+\lambda_{2}+\lambda_{3}\ge -3\bar{\lambda},
		\end{equation*}
		then  one of the following must be true:\\
		(1) $M$ is flat.\\
		(2) $M$ is conformal to a quotient of the round sphere $(S^4, g_0)$.\\
		(3) $M$ is conformal to a quotient of  $\mathbb{S}^1\times \mathbb{S}^{3}$ with the product metric.\\
		(4) up to rescaling, $M$ is $\mathbb{CP}^2$ with the Fubini–Study metric.\\
		(5) up to rescaling,  $M$ is isometric to a quotient of $\Sigma_1^2\times \Sigma_2^2$ with the product metric, where $k_i(i=1,2)$ is constant Gaussian curvature of $\Sigma_i^2$  and $\frac{s}{2}=k_1+k_2>0$.
	\end{corollary}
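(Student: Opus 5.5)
The plan is to apply Theorem~\ref{C} first and then use $\chi(M)\ge 0$ to refine its conclusion (1), the conformally flat case. Cases (2) and (3) of Theorem~\ref{C} pass over unchanged to items (4) and (5) of the corollary. For $\mathbb{CP}^2$ we have $\chi=3$, and for quotients of $\Sigma_1^2\times\Sigma_2^2$ the Euler characteristic is a multiple of $\chi(\Sigma_1)\chi(\Sigma_2)$ up to the covering degree. The hypothesis $\chi\ge0$ is then only relevant to the remaining case.

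So suppose $M$ is compact, locally conformally flat, with harmonic Weyl curvature (here automatic), and $s\ge 0$. The inequality $s\ge0$ was established in the proof of Theorem~\ref{C} from the cone condition: indeed $a_3,b_3\le s/6$ and $a_3,b_3\ge 0$. The Chern--Gauss--Bonnet formula in dimension four with $W=0$ reads
\[
8\pi^2\chi(M)=\int_M\Big(\frac{s^2}{24}-\frac12|\Riczero|^2\Big)\,\dVol .
\]
Rather than extracting the conclusion from this integral directly, I would invoke the classification of compact locally conformally flat manifolds with nonnegative scalar curvature due to Schoen--Yau, as used via \cite{CH,Z} in the proof of Theorem~\ref{A}.

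That classification splits into subcases:
\begin{enumerate}
\item If the Yamabe invariant is positive, the developing map is injective and $\tilde M$ is conformal to a domain $\Omega\subset S^4$ with $\pi_1(M)$ acting as Kleinian group; the limit set has small Hausdorff dimension, at most $1$ in dimension $4$.
\item If the Yamabe invariant is zero, then $M$ is conformal to a scalar-flat conformally flat manifold. For $W=0$ and $s=0$ the formula gives $8\pi^2\chi=-\tfrac12\int|\Riczero|^2\le0$, so $\chi\ge0$ forces $\Riczero=0$. Hence $M$ is Ricci flat with $W=0$, i.e.\ flat, which is item (1).
\end{enumerate}

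In the positive case the Euler characteristic is used as follows. For Kleinian manifolds $\chi(M)$ is computed in terms of the group: if $\pi_1$ is finite, $\Omega=S^4$ and $M$ is a spherical space form (item (2)). If the limit set is two points, $M$ is conformal to a quotient of $S^1\times S^3$ (item (3)), with $\chi=0$. The main obstacle is ruling out the remaining positive-Yamabe cases, such as connected sums of copies of $S^1\times S^3$ or limit sets of dimension in $(0,1]$. These have $\chi<0$; for a connected sum of $k\ge2$ copies, $\chi=2-2k<0$.

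My plan there is to use the theorem of Schoen--Yau/Izeki and Kulkarni relating $\chi$ to the virtual cohomological dimension of $\pi_1(M)$ for conformally flat $4$-manifolds with positive scalar curvature. For $M=\Omega/\Gamma$ with limit set of dimension $\le 1$, $\Gamma$ is virtually free, so $\chi(M)=2\chi(\Gamma)$ by an $L^2$/orbifold Euler characteristic argument. Thus $\chi(M)\ge0$ forces $\Gamma$ to be virtually trivial or virtually $\mathbb{Z}$, which gives exactly items (2) and (3). I would cite this Gauss--Bonnet classification rather than reprove it; verifying that the cited statement covers the equality case $\chi=0$ is where care is needed.
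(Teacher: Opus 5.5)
\textbf{There is a genuine gap: the positive-Yamabe case, which is where $\chi(M)\ge 0$ is actually used, is not established.} Your reduction to the conformally flat case is fine. In the positive-Yamabe branch, everything rests on one claim: a Kleinian group $\Gamma$ with $\Omega/\Gamma$ compact and limit set of Hausdorff dimension at most $1$ is virtually free.

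As stated, this claim is false. Take a cocompact Fuchsian group $\Gamma\subset SO(2,1)\subset SO(5,1)$. Its limit set is a round circle, and $S^4\setminus S^1$ is conformal to $\mathbb{H}^2\times S^2$. So $\Omega/\Gamma=\Sigma_g\times S^2$ is a compact conformally flat manifold whose fundamental group is a surface group, which is not virtually free. This example is scalar-flat, so it lies in your zero-Yamabe branch. Still, it shows that the bound $\dim_H\Lambda\le 1$ alone cannot give virtual freeness.

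What you would need is a precise theorem that uses positivity of the Yamabe invariant and yields either virtual freeness or directly the classification for $\chi\ge 0$, including the equality case $\chi=0$. You only point to ``Schoen--Yau/Izeki and Kulkarni'' and concede that coverage of $\chi=0$ is unchecked. So ruling out everything except quotients of $S^4$ and $S^1\times S^3$ is left open, and that is the whole content of the corollary beyond Theorem~\ref{C}.

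\textbf{How the paper does it.} The paper never passes through the Kleinian group.
\begin{enumerate}
\item For $W=0$ and $s>0$ it gets $b_2(M)=0$ from \cite[Proposition F]{F}. This also follows from the Weitzenb\"ock formula $\Delta=\nabla^*\nabla-2W^{\pm}+\frac{s}{3}$ on $\Lambda^{\pm}$. Hence $0\le\chi(M)\le 2$.
\item It then uses exactly the Gauss--Bonnet integral you set aside. Your identity $8\pi^2\chi=\int_M(\frac{s^2}{24}-\frac12|\Riczero|^2)$ turns $\chi\ge0$ into the integral pinching $\int_M|\Riczero|^2\le\frac{1}{12}\int_M s^2$.
\item This pinching feeds into the rigidity theorem \cite[Theorem 1.7]{FX} for conformally flat $4$-manifolds. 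That theorem gives conformal equivalence to a quotient of $S^4$ or of $S^1\times S^3$.
\end{enumerate}
To keep your approach, replace the virtual-freeness claim by such a pinching theorem, or by an exact citation that handles $\chi=0$.

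\textbf{Two smaller points.}
\begin{enumerate}
\item In your zero-Yamabe branch, Gauss--Bonnet shows the scalar-flat representative of the conformal class is flat, but item (1) concerns $g$ itself. You should note that $s_g\ge 0$ together with $Y([g])=0$ forces $s_g\equiv 0$. Otherwise the conformal Laplacian would have positive first eigenvalue. With $s_g\equiv 0$ the argument applies to $g$ directly.
\item The references \cite{CH,Z}, as the paper uses them in the proof of Theorem~\ref{E}, require nonnegative Ricci curvature, and the cone condition does not supply this. They are not a classification under $s\ge 0$.
\end{enumerate}
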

	\begin{proof}If $W=0$ and $s=0$, the Chern–Gauss–Bonnet theorem implies that $\mathring{Ric}=0$, i.e., $M$ is flat.
		
		In the case of $W=0$ and $s>0$, then \cite[Proposition F]{F} implies that the second Betti number  $b_2(M)=0$, and $0\leq\chi(M)\leq2$, i.e., the first Betti number  $b_1(M)=0$ or $1$. It is derived from the Chern–Gauss–Bonnet theorem that
		$$-2\int_M|\mathring{Ric}|^2+\frac{1}{6}\int_M s^2=32{\pi}^2\chi(M)\geq0, \text{i.e.}, \int_M|\mathring{Ric}|^2\leq\frac{1}{6}\int_M s^2.$$
		By \cite[Theorem 1.7]{FX},  $M$ is conformal to a quotient of the stand sphere $\mathbb{S}^4$  or $\mathbb{S}^1\times \mathbb{S}^{3}$ with the product metric. Combining Theorem~\ref{C}, we complete the proof  of  Corollary~\ref{D}.
	\end{proof}
	\begin{remark}
		For  $4$-dimensional manifolds with harmonic curvature, in  (2) and (3) of Corollary~\ref{D},  the term ``conformal" can be strengthened to ``isometric". In fact, $M$ is isometric to a quotient of the stand sphere $\mathbb{S}^4$ since the stand sphere $\mathbb{S}^4$ is Einstein. On the other hand,  $\chi(M)=0$ implies that $\int_M|\mathring{Ric}|^2=\frac{1}{12}Y^2(M,[\tilde{g}])$ since $-2\int_M|\mathring{Ric}|^2+\frac{1}{6}\int_M s^2$ is conformal invariant. By \cite[Theorem 1.6]{FX},  $M$ is isometric to a quotient of  $\mathbb{S}^1\times \mathbb{S}^{3}$ with the product metric.
	\end{remark}
	
	\begin{thm}\label{E}
		Let $(M,g)$ be a $4$-dimensional compact manifold with harmonic Weyl curvature and $4\frac12$-nonnegative curvature operator of the second kind.
		Then  one of the following must be true:\\
		(1) $M$ is flat.\\
		(2)  $M$ is conformal to a quotient of the round sphere $(\mathbb{S}^4, g_0)$.\\
		(3) $M$ is conformal to a quotient of  $\mathbb{R}^1\times \mathbb{S}^{3}$ with the product metric.\\
		(4)  up to rescaling, $M$ is isometric to $\mathbb{CP}^2$ with the Fubini–Study metric.
	\end{thm}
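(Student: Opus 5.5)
The plan is to reduce Theorem~\ref{E} to Theorem~\ref{C}, together with a classification of locally conformally flat manifolds, and then to use the stronger hypothesis to eliminate the product case.

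\textbf{Step 1: the hypothesis implies the cone condition (\ref{cone}).} On a $4$-manifold $\dim S_0^2=9$. Let $\lambda_1\le\dots\le\lambda_9$ be the eigenvalues of $\mathring{R}$ and $\bar\lambda=\frac19\sum\lambda_\alpha$. The hypothesis of $4\frac12$-nonnegativity reads $2(\lambda_1+\lambda_2+\lambda_3+\lambda_4)+\lambda_5\ge0$, and this forces $\lambda_5\ge0$. Indeed, if $\lambda_5<0$ then $\lambda_1,\dots,\lambda_5$ are all negative, and the left-hand side would be negative. Using $\lambda_6,\dots,\lambda_9\ge\lambda_5$, we get
\[
3(\lambda_1+\lambda_2+\lambda_3)+9\bar\lambda\ \ge\ 4(\lambda_1+\lambda_2+\lambda_3)+\lambda_4+5\lambda_5 .
\]
The right-hand side equals $2\bigl[2(\lambda_1+\dots+\lambda_4)+\lambda_5\bigr]+3(\lambda_5-\lambda_4)$, which is nonnegative. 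Hence (\ref{cone}) holds, and Theorem~\ref{C} applies: $M$ is conformally flat, or homothetic to $\mathbb{CP}^2$ with the Fubini--Study metric, or a quotient of a product $\Sigma_1^2\times\Sigma_2^2$ of constant curvature surfaces with $k_1+k_2>0$.

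\textbf{Step 2: exclude the product case.} Take the orthonormal frame adapted to the splitting, with $e_1,e_2$ tangent to $\Sigma_1$ and $e_3,e_4$ tangent to $\Sigma_2$. I will test $\mathring{R}$ on an explicit orthonormal family of five traceless symmetric tensors. Candidates are $\frac{1}{\sqrt2}e_1\odot e_2$, $\frac{1}{\sqrt2}e_3\odot e_4$, the diagonal tensors $\frac12(e_1\otimes e_1-e_2\otimes e_2)$ and $\frac12(e_3\otimes e_3-e_4\otimes e_4)$, and the mixed traceless diagonal tensor $\frac{1}{2\sqrt2}\,\mathrm{diag}(1,1,-1,-1)$. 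The quadratic forms of $\mathring{R}$ on these are explicit combinations of $k_1$ and $k_2$; the mixed terms $e_i\odot e_j$ with $i\le2<j$ give $0$. Since the $k$-th partial eigenvalue sum is at most the corresponding weighted sum of diagonal entries on any orthonormal family, it suffices to show that the weighted sum $2(q_1+\dots+q_4)+q_5$ over a suitable choice and ordering of this family is negative whenever $k_1+k_2>0$, unless the metric is flat.

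As a sanity check in the symmetric case $k_1=k_2$, the diagonal entries from Theorem~\ref{C} are $\frac{s}{4}$ four times, $0$ four times, and $-\frac{s}{4}$ once. This gives $\lambda_1+\dots+\lambda_4+\frac12\lambda_5\le-\frac{s}{4}<0$, so the hypothesis fails. The general case $k_1\ne k_2$ is the step I expect to be the main obstacle. The right test family may need to mix the two diagonal blocks with weights depending on $k_1/k_2$, and one must also handle the possibility that one $k_i$ is negative. In that situation $\frac{1}{\sqrt2}e_1\odot e_2$ (or $\frac{1}{\sqrt2}e_3\odot e_4$) has quadratic form $-k_i$ or similar, which should already give a negative contribution.

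\textbf{Step 3: the conformally flat case.} Here $W=0$, and by Step 1 and Theorem~\ref{C} we have $s\ge0$. If $s=0$, then $a_i=b_j=0$, so all diagonal entries $\lambda_{ij}$ vanish. The trace-free Ricci part $\mathring{\Ric}$ still contributes to $\mathring{R}$. I would use Proposition~\ref{3.1}, with $\frac{(n-1)(n-2)}{2}=3\le4\frac12$, to get $\Ric\le\frac{s}{2}=0$, and $s=0$ then forces $\Ric=0$, hence flatness. If $s>0$, I would invoke the result of \cite{Li24,CGT23} that $4\frac12$-nonnegative curvature operator of the second kind in dimension four implies nonnegative isotropic curvature (PIC-type). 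A compact locally conformally flat manifold with positive scalar curvature and nonnegative isotropic curvature is conformal either to a spherical space form or to a quotient of $\mathbb{R}\times\mathbb{S}^3$, by the classification in \cite{CH,Z} together with the Schoen--Yau developing map argument. This gives alternatives (1)--(4), with $\mathbb{CP}^2$ surviving since its second-kind operator is known to satisfy the hypothesis.
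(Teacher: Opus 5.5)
Your Step 1 is correct and is essentially the paper's reduction to Theorem~\ref{C}. The genuine gap is in Step 3 when $s>0$, because nonnegative isotropic curvature is the wrong curvature input. In dimension four, isotropic curvature depends only on $s$ and $W^{\pm}$: the condition is $a_3\le\frac{s}{6}$ and $b_3\le\frac{s}{6}$. So for $W=0$ it says nothing beyond $s\ge0$, which you already have. A compact conformally flat $4$-manifold with $s>0$ also need not be on your list. Schottky-group quotients of domains in $\mathbb{S}^4$ give conformally flat metrics of positive scalar curvature on $\#_k(\mathbb{S}^1\times\mathbb{S}^3)$, $k\ge2$, and the Schoen--Yau developing-map argument does not rule them out. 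The classifications in \cite{CH,Z} assume $\Ric\ge0$, which your argument never produces, since Proposition~\ref{3.1} gives only an upper bound. The missing ingredient is the one the paper uses: by \cite[Theorem 1.6]{Li22}, a $4\frac12$-nonnegative $\mathring{R}$ in dimension four has $\Ric\ge0$. With that, Carron--Herzlich and Zhu give alternatives (1)--(3) at once. The case $s=0$ is included, since $\Ric\ge0$, $s=0$ and $W=0$ force flatness.

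There are two further problems.

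\textbf{Case $s=0$.} You apply Proposition~\ref{3.1}, but it requires $3$-nonnegativity. Since $k$-nonnegativity becomes weaker as $k$ grows, $4\frac12$-nonnegativity does not imply it; ``$3\le4\frac12$'' runs the wrong way. The bound $\Ric\le\frac s2$ can be rescued. The orthonormal family $\frac1{\sqrt2}e_k\odot e_l$ ($2\le k<l\le4$), $\frac1{\sqrt6}\diag(0,2,-1,-1)$, $\frac1{\sqrt2}\diag(0,0,1,-1)$ has total quadratic form $\frac53(\frac s2-R_{11})$. Your observation $\lambda_5\ge0$ upgrades the hypothesis to $5$-nonnegativity, which closes the argument. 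Using $\Ric\ge0$ is simpler.

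\textbf{Step 2.} This is left open, and the family you list would not work: $e_1\odot e_2$, $e_3\odot e_4$ and the two block-diagonal tensors have quadratic forms $k_1,k_2,k_1,k_2$. Use instead the four mixed tensors $\frac1{\sqrt2}e_i\odot e_j$ with $i\le2<j$, each of form $0$, and $\frac12\diag(1,1,-1,-1)$, of form $-\frac{k_1+k_2}{2}$. In fact $\mathring{R}$ of the product has eigenvalues $k_1,k_1,k_2,k_2,0,0,0,0,-\frac{k_1+k_2}{2}$. Hence $\lambda_1+\dots+\lambda_4+\frac12\lambda_5\le-\frac{k_1+k_2}{2}<0$ whenever $k_1+k_2>0$. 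Equivalently, the diagonal entries $\frac{s}{12}-a_i-b_j$ in the basis of Theorem~\ref{C} do not involve $\mathring{\Ric}$, so your $k_1=k_2$ computation already covers every product. The paper itself only mentions $\mathbb{S}^2\times\mathbb{S}^2$ at this point.
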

	\begin{remark}
		For general $4$-dimensional manifolds, \cite[Theorem 1.4]{Li22} yields a similar but weaker result than the one stated in this theorem. For  $4$-dimensional manifolds with harmonic curvature, in  (2) and (3) of Theorem~\ref{E},  the term ``conformal" can be strengthened to ``isometric".  This can be obtained similarly to the latter part of the proof of Corollary~\ref{D} that $M$ is isometric to a quotient of either the round sphere $\mathbb{S}^4$ or $\mathbb{R}^1\times \mathbb{S}^{3}$.
	\end{remark}
	\begin{proof}
		Suppose that ${\lambda }_1\le {\lambda }_2\le \cdots \le {\lambda }_{9}$
		are the eigenvalues of $\mathring{R}\,$ and $\bar{\lambda }$ is their average. Then $4\frac12$-nonnegative curvature operator of the second kind $\mathring{R}\,$ satisfies
		\begin{align*}
			& {\lambda }_1+{\lambda }_2+{\lambda }_3\geq-{\lambda }_4-\frac12 {\lambda }_5\\ 
			&\geq-\frac{\frac12 {\lambda }_5+{\lambda }_6+{\lambda }_7+{\lambda }_8+{\lambda }_9}{3} \\ 
			& \geq-\frac{{\lambda }_1+{\lambda }_2+{\lambda }_3+{\lambda }_4+{\lambda }_5+{\lambda }_6+{\lambda }_7+{\lambda }_8+{\lambda }_9}{3} \\ 
			& =-3\bar{\lambda }.
		\end{align*}
		Combining Theorem~\ref{C}, $M$ is either flat, conformal flat, or is isometric to $\mathbb{CP}^2$ with the Fubini–Study metric  because $\mathbb{S}^2\times \mathbb{S}^2$ has $4\frac12$-negative curvature operator of the second kind.
		By \cite[Theorem 1.6]{Li22}, $M$ has nonnegative Ricci curvature. Then, applying  \cite[Theorem A]{CH} and \cite[Theorem 1]{Z}, we conclude that $M$ is conformal to a quotient of either the round sphere $\mathbb{S}^4$ or $\mathbb{R}^1\times \mathbb{S}^{3}$.
		This completes the proof of Theorem~\ref{E}.
	\end{proof}

\end{document}